 \newtheorem{theorem-main}{\bf Theorem}
\newtheorem{theorem}{\bf Theorem}
 \newtheorem{lemma}[theorem]{\bf Lemma}
 \newtheorem{proposition}[theorem]{\bf Proposition}
 \newtheorem{definition}[theorem]{\bf Definition}
    \newtheorem{rks}[theorem]{\bf Remarks}
 \newcommand{\R}{\mathbb{R}}
 \newcommand{\Q}{\mathbb{Q}}
 \newcommand{\N}{\mathbb{N}}
 \newcommand{\Z}{\mathbb{Z}}
  \newcommand{\C}{\mathbb{C}}
 \newcommand{\MM}{\mathcal{M}}
\newcommand{\vp}{\varphi}
\newcommand{\wt}[1]{{{\widetilde{#1}}}}
\newcommand{\g}{\gamma}
\renewcommand{\wt}[1]{\widetilde{#1}}
 \renewenvironment{proof}{{\em \noindent Proof.}}
 {\hfill $\square$\newline}
 \newcommand{\obra}[3]{{\sc #1} {\em #2}. {#3}.}
\title[Real Turrittin Theorem]
{Turrittin's Theorem revisited. The real case}
\author{M. Barkatou}
\address{XLIM, Universit\'e de Limoges ;  CNRS,\\123, Av.  A. Thomas, 87060 Limoges cedex, France}
\email{moulay.barkatou@unilim.fr}
\author{F. A. Carnicero}
\address{Departamento de \'{A}lgebra, An\'{a}lisis Matem\'{a}tico, Geometr\'{\i}a y Topolog\'{\i}a. 
	Universidad de Valladolid.
	Facultad de Ciencias. Paseo de Belén, 7, E-47011. Valladolid, Spain}
\email{carnicero\_fac@hotmail.com}
\author{F. Sanz Sánchez}
\address{Departamento de \'{A}lgebra, An\'{a}lisis Matem\'{a}tico, Geometr\'{\i}a y Topolog\'{\i}a. 
	Universidad de Valladolid.
	Facultad de Ciencias. Paseo de Belén, 7, E-47011. Valladolid, Spain} \email{fsanz@uva.es}
\thanks{This work was supported by Ministerio de Ciencia e Innovaci\'on (MTM2016-77642-C2-1-P and PID2019-105621GB-I00) and by Junta de Castilla y León (VA083G19). 
The first author thanks UVa for the support during several research stays at the Departamento de \'Algebra, Análisis Matemático, Geometría y Topología.}
\date{}
\keywords{Linear systems of meromorphic ODEs, Formal Normal Forms, Turrittin's Theorem}
\begin{document}
\maketitle

\begin{abstract}
We establish a real version of Turrittin's result on polynomial and formal normal forms of linear systems of ODEs with meromorphic coefficients. Both the normal forms or the transformations used have only real coefficients. In order to adapt the proofs to the real case, we make a review of the result in the complex case.
\end{abstract}

\section{Preliminaries and statements}

Let $K$ be a field of characteristic zero and let $L_K=K[[x]][x^{-1}]$ be the field of formal meromorphic series with coefficients in $K$, endowed with the usual derivation with respect to $x$ (denoted only by a prime), and the usual valuation $\nu:L_K\to\Z\cup\{\infty\}$ defined as the minimum of the support of the series, also called the {\em order}. 
As a matter of notation, if $R$ is any ring and $n\in\N_{\ge 1}$, $\MM_n(R)$ denotes the ring of square matrices of size $n$ with entries in $R$.

A matrix $A\in\mathcal{M}_n(L_K)$ is identified  with the {\em formal meromorphic linear system of ODEs} 
$$
[A]\;\;\;\;\;Y'=AY,
$$
where $Y=(Y_1,...,Y_n)^t$ is a column vector of $n$ variables. Define {\em the order of $A$} to be $\nu(A):=\min\{\nu(a_{ij})\,:\,1\le i,j\le n\}$, where $A=(a_{ij})$. Sometimes we use the notation $A=A(x)$ to make explicit that we are dealing with meromorphic series in the variable $x$. Correspondingly, we will usually write the system as a series of matrices in the form
\begin{equation}\label{eq:system-develop}
A=x^{\nu(A)}(A_0+xA_1+\cdots),\mbox{ where }  A_i\in\mathcal{M}_n(K)\mbox{ for any }i\mbox{ and }A_0\ne 0.
\end{equation}
Also, if $N$ is a non-negative integer, the {\em truncated system} up to degree $N$ is defined by
$$
J_NA:=x^{\nu(A)}(A_0+xA_1+\cdots+x^{N}A_{N}).
$$
The system $A$ is called {\em singular} (at $x=0$) if $\nu(A)<0$. The {\em Poincar\'{e} rank} of the system is defined as the non-negative integer $q=q(A):=\max\{-\nu(A)-1,0\}$. We usually rewrite $A$ as the system of formal linear ODEs
$$
x^{q+1}Y'=\wt{A}Y,\;\;\mbox{ where }\;\;\wt{A}=A_0+xA_1+\cdots.
$$
A singular system with Poincaré rank $q=0$ (resp. $q>0$) is usually referred to be of {\em first kind} (resp. of {\em second kind}).

Denote by $I_n$ the identity matrix of size $n$. Define the {\em radiality index} of $A$ as the non-negative integer
$$
k=k(A):=\min(\{j\,:A_j\not\in K I_n\}\cup\{q\}).
$$
The truncation $J_{k-1}A$ is called the {\em radial part} of $A$. 

\strut

We are interested in the problem of getting formal normal forms of a given singular system under transformations of one of the following types:
\begin{enumerate}[(i)]
	\item Given $P\in GL_n(L_K)$, 
	the linear change of variables
	$
	Y=PZ$ transforms the system $[A]: Y'=AY$ into the system $[B]: Z'=BZ$ where 
	$$
	B=P^{-1}AP-P^{-1}P'.
	$$
	The map $
	\Psi_P:\MM_n(L_K)\to\MM_n(L_K)
	$ sending $A$ to $\Psi_P[A]:=P^{-1}AP-P^{-1}P'$ is bijective. It is called
	the {\em gauge transformation associated to $P$}. 
	A gauge transformation $\Psi_P$ will be called: {\em regular}, if $\nu(P)=0$ and $\det P(0)\ne 0$; {\em polynomial}, if  each entry of $P$ belongs to $K[x]$; {\em diagonal monomial}, if $P=\mbox{diag}(x^{k_1},\cdots,x^{k_n})$ with $k_j\in\N_{\ge 0}$ for each $j$. 
    \item Given $r\in\N_{\ge 1}$,  the change of the independent variable
    $
    x=z^r,
    $
    transforms the system $\frac{dY}{dx}=A(x)Y$ into a system $\frac{dY}{dz}=B(z)Y$, where 
    $$
    B(z)=rz^{r-1}A(z^r).
    $$
    Re-written with the same letter $x$, we define the map $R_r:\MM_n(L_K)\to\MM_n(L_K)$ given by $R_r[A]:=rx^{r-1}A(x^r)$, called the {\em ramification of order $r$}. It is an injective map but not bijective for $r>1$. 
\end{enumerate}

In addition, we are interested in polynomial (truncated) normal forms obtained by means of polynomial gauge transformations and ramifications (so that, if the initial system is polynomial or convergent, we preserve this character).  

\strut

The case $K=\C$ (or more generally, $K$ algebraically closed) is classical and treated with different approaches in the literature (Birkhoff \cite{Bir}, Hukuhara \cite{Huk1,Huk2}, Turrittin \cite{Tur2}, Wasow \cite{Was}, Moser \cite{Mos}, Balser-Jurkart-Lutz \cite{Bal-J-L}, Babbitt-Varadarajan \cite{Bab-V}, Barkatou \cite{Bar1,Bar2}, Barkatou-Pfl\"{u}gel \cite{Bar-P}. The different avatars of the algorithms for obtaining normal forms are commonly referred (as we will do here) by the generic expression {\em Turrittin's Theorem}.

Our objective in this paper is to extend Turrittin's Theorem to the real case $K=\R$, or more generally to the case where $K$ is a real closed field. 
As far as we know, this case has not been treated yet (except, of course, in the situation of a constant system $A\in\MM_n(K)$ for which the usual well known real Jordan canonical form of $A$  was proposed by Turrittin himself in \cite{Tur3}). We present versions of real (formal and polynomial) normal forms for any system, in such a way that they can be obtained by transformations written in the base field $K$, without passing through the algebraic closure $\overline{K}=K(\sqrt{-1})$. 

\subsection{The complex case}
In order to make precise statements and expose some of the steps that are useful to treat the real case, we propose first a brief revision of the complex case. Despite of its prevalence in the literature, we are led ourselves to sketch the different steps of the corresponding proofs (in section~\ref{sec:proofs-complex}), instead of simply addressing the reader to the references. There are additional reasons to justify this revision:

- Although there are other proofs (even better ones from the point of view of computational effectiveness, see \cite{Bar2}), maybe the most commonly used reference for the complex case is Wasow's book \cite{Was}. We decided to follow also this last reference here.  However, in that proof, the final arguments concerning the induction on the Poincar\'{e} rank is perhaps not sufficiently clarified: it drops as long as we do not need to make a ramification, but it increases after ramifications, an operation which is unavoidable in general. The required modification, even its simplicity, is worth to be made, in any case.

- In existing proofs of Turrittin's Theorem, it is frequently allowed the use of {\em exponential shiftings} when the leading matrix has a single eigenvalue. Such transformations have not an algebraic or formal nature and are ``strange'' to the initial setting of the systems. Although they commute with the whole matrix of the system so that the resulting system has also formal meromorphic coefficients, the exponential shiftings may behave very badly with respect to non-linear terms in general systems. Thus, for applications, it is better to avoid these operations.

- The search of precise statements for polynomial normal forms of Turrittin's result make necessary to enter in some details of the proofs; such statements are not exactly pursued in the common references, mostly devoted to obtain expressions of a fundamental matrix of solutions (cf. Remark~\ref{rks}, (b) below).

\strut

We start by defining the normal forms that we expect to get. 

\begin{definition}[Turrittin-Ramis-Sibuya form]\label{def:TRS-form}
	Let $A\in\MM_n(L_K)$ be a system with Poincaré rank $q=q(A)$ and let $\mu\in\N_{\ge 0}$. We say that $A$ is in {\em Turrittin-Ramis-Sibuya form of degree $\mu$ (and of rank $q$)}, or {\em in $(TRS)_\mu^q$-form} for short, if it is written as
	$$
	A(x)=x^{-(q+1)}\left(D(x)+x^qC+O(x^{q+\mu+1})\right),
	$$ 
	where $D(x)={\rm diag}(d_1(x),...,d_n(x))$ is a diagonal matrix with polynomial entries $d_j(x)\in K[x]$ of degree at most $q-1$ (equal zero iff $q=0$) and $C\in\mathcal{M}_n(K)$ is a constant matrix commuting with $D(x)$. In this case, the truncated system $J_q(A)=x^{-(q+1)}(D(x)+x^qC)$ is called the {\em principal part} of $A$, while $D(x)$, resp. $C$, is called the {\em exponential part}, resp. the {\em residual matrix}.
\end{definition}
Notice that if the system $A$ is singular of first kind (that is $q=0$) then $A$ is already in $(TRS)_0^0$-form, with exponential part equal to $D(x)=0$ and residual matrix $C=A_0$. On the other hand, if $A$ is in $(TRS)_\mu^q$-form for some $\mu$ and $q>0$  then its exponential part $D(x)$ is not zero; in fact, it satisfies $D(0)\ne 0$.

The names Ramis and Sibuya in the definition above come from those authors paper \cite{Ram-S}, devoted to summability properties of formal solutions of systems of holomorphic ODEs where the linear part is in $(TRS)$-form of some degree. We have added the name Turrittin by obvious reasons. It is worth to notice that analogous expressions as the $(TRS)$-forms appear also in the context of germs of biholomorphisms in \cite{Lop-S,LRSV} (where the name of ``Ramis-Sibuya form'' is used). 

\begin{definition}\label{def:non-resonant matrix}
Let $C\in\MM_n(K)$ be a constant square matrix with entries in $K$. We say that $C$ is {\em non-resonant}\footnote{Other authors, for instance Balser in his book \cite{Bal}, use instead the terminology ``$C$ has good spectrum''.}  if for any pair of distinct eigenvalues $\lambda,\lambda'\in\overline{K}$ of $C$ we have $\lambda-\lambda'\not\in\Z$. 
\end{definition}

Now, Turrittin's results for the case where $K=\overline{K}$ can be stated in the two following theorems. 
\begin{theorem}[Complex Polynomial Normal Form]\label{th:c-turrittin-poly} 
		Suppose that $K$ is an algebraically closed field and let $A\in\MM_n(L_K)$ be a singular system with Poincaré rank equal to $q=q(A)$.
		\begin{enumerate}[(i)] 
		\item 
		There exist some $r\in\N_{\ge1}$ and finitely many polynomial gauge transformations $\vp_1,...,\vp_m$, either regular or diagonal monomial, such that, denoting $$\psi=\vp_m\circ\cdots\circ\vp_1\circ R_r,$$ the transformed system $\wt{A}=\psi[A]$ is in $(TRS)_0^{\tilde{q}}$-form, where $\tilde{q}=q(\wt{A})$. Moreover, if $B\in\MM_n(L_K)$ is another singular system with $q(B)=q$ and $J_{nq}A=J_{nq}B$ then $\wt{B}=\psi[B]$ is also in $(TRS)_0^{\tilde{q}}$-form with the same principal part as $\wt{A}$; i.e, $q(\wt{B})=\tilde{q}$ and
		$J_{\tilde{q}}\psi[A]=J_{\tilde{q}}\psi[B].$
		\item Assume that $A$ is in $(TRS)^q_0$-form and that its residual matrix is non-resonant. Then, for any given $\mu\ge 0$, there exists a regular polynomial gauge transformation $\phi_\mu=\Psi_{P^\mu}$, where $P^\mu(0)=I_n$, such that $\phi_\mu[\wt{A}]$ is in $(TRS)_\mu^{\tilde{q}}$-form with the same principal part as the original system $A$. Moreover, the family $\{P^\mu\}_\mu$ can be chosen such that $P^\mu$ is of degree at most $q+\mu$ and satisfying that $J_{q+\mu} P^{\mu'}=J_{q+\mu} P^{\mu}$ for any $\mu'>\mu$. 
		\item Assume that $A$ is in $(TRS)^q_0$-form. Then there exists a  gauge transformation $\phi$, given by a finite composition of regular polynomial or diagonal monomial transformations, such that $\phi[A]$ is in $(TRS)^q_0$-form with non-resonant residual matrix (and the same exponential part as $A$).  
	\end{enumerate}	
\end{theorem}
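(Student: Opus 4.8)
The three items are proved in turn; throughout, the idea is to use the classical reduction tools while keeping every transformation polynomial (in particular avoiding exponential shifts) and keeping track of how many leading coefficients of the system are read off.

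\emph{Item (i).} The plan is an induction on the size $n$. I would first record two elementary facts: $R_{r_1}\circ R_{r_2}=R_{r_1r_2}$ and $R_r\circ\Psi_P=\Psi_{P(x^r)}\circ R_r$, and that $x\mapsto x^r$ sends a regular (resp. diagonal monomial) polynomial gauge to one of the same type; these allow one to commute a ramification to the extreme right and to replace several ramifications by a single one of order the lcm, so that $\psi$ can indeed be written as $\vp_m\circ\cdots\circ\vp_1\circ R_r$. For the inductive step, if the leading matrix $\wt A_0$ of $x^{q+1}Y'=\wt AY$ has at least two distinct eigenvalues in $K$, the Hukuhara--Turrittin splitting lemma (in its polynomial, truncated form) provides a regular polynomial gauge block-diagonalizing $J_NA$ into blocks of strictly smaller size and Poincaré rank $\le q$, and one applies the inductive hypothesis to each block. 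If $\wt A_0$ has a single eigenvalue, I would run the Moser reduction: while the system is Moser-reducible, a polynomial gauge (regular, together with diagonal monomial shears) strictly decreases the Moser rank, so after finitely many steps either $q$ reaches $0$ --- and the system is then already in $(TRS)_0^0$-form --- or one arrives at a Moser-irreducible system with $q\ge 1$, for which a ramification of the suitable order (a denominator of a slope of the Newton polygon) makes the new leading matrix acquire at least two distinct eigenvalues, reducing to the previous case. The delicate point --- the one flagged above for Wasow's treatment --- is that $q$ need not be monotone, because ramification increases it; this is remedied by observing that the Moser loop is finite at fixed $n$ and the single ramification at its end is immediately followed by a genuine size reduction, so the whole recursion is controlled by the well-founded parameter $n$. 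For the ``moreover'' statement, I would check that each elementary operation (a splitting, a Moser step, a ramification) depends only on, and determines, a bounded number of leading coefficients; summing these bounds over the at most $n-1$ size reductions and the Moser steps between them gives the uniform bound $nq$, so the same $\psi$ works for any $B$ with $J_{nq}B=J_{nq}A$ and yields the same principal part.

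\emph{Item (ii).} Writing $A=x^{-(q+1)}\big(D(x)+x^qC+\sum_{j\ge q+1}x^jA_j\big)$ with $C$ non-resonant, I would seek $P^\mu=I_n+\sum_{k\ge 1}x^kP_k$ and determine the $P_k$ recursively so as to kill the coefficients of $x^0,\dots,x^{\mu-1}$ of $\Psi_{P^\mu}[A]$ while leaving $J_qA$ unchanged. Expanding $\Psi_P[A]=P^{-1}AP-P^{-1}P'$ order by order, the coefficient in which a given $P_k$ appears linearly is acted on by a homological operator: on the entries ``off-diagonal'' with respect to the leading diagonal $d^{(0)}$ of $D(x)$ it is $\mathrm{ad}_{d^{(0)}}$ (invertible there since $q\ge 1$), refined on finer blocks by the successive diagonal pieces $d^{(1)},\dots,d^{(q-1)}$ of $D(x)$, and on the indices where $D(x)$ is entirely scalar it is $\mathrm{ad}_C-kI$, which is invertible for every $k\ge 1$ precisely because $C$ is non-resonant. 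Hence every homological equation is solvable and $P_k$ is pinned down by the lower-order data; since passing from $\mu$ to $\mu+1$ only introduces new coefficients without altering the previous ones, the compatibility $J_{q+\mu}P^{\mu'}=J_{q+\mu}P^{\mu}$ ($\mu'>\mu$), the degree bound $\deg P^\mu\le q+\mu$, the normalization $P^\mu(0)=I_n$ and the preservation of the principal part all follow by bookkeeping of the orders involved.

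\emph{Item (iii).} If the residual matrix $C$ of $A$ is resonant, I would remove the resonance by shearing, iterating the following step. First, a regular polynomial gauge brings $C$ --- acting on each constant block of $D(x)$ separately --- to a form that is block-triangular with respect to the classes of eigenvalues differing by an integer, and truncates the higher-order part far enough; then a diagonal monomial gauge $P=\mathrm{diag}(x^{k_1},\dots,x^{k_n})$, with exponents constant on each eigenvalue class and in decreasing order along the triangular structure, is applied. Such a $P$ commutes with the scalar blocks of $D(x)$, hence does not change the exponential part; it replaces $C$ by $C-\mathrm{diag}(k_j)$ up to off-block terms of non-negative order, and the preparatory step guarantees that no negative-order term is created, so the system stays in $(TRS)_0^q$-form. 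Choosing the $k_j$ so as to strictly decrease the non-negative integer resonance defect (for instance $\sum\lfloor\mathrm{Re}(\lambda-\lambda')\rfloor$ over ordered pairs of eigenvalue classes) and iterating finitely often yields a non-resonant residual matrix with the same exponential part.

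\emph{Expected main obstacle.} The substantive difficulty is concentrated in item (i): making the induction genuinely well-founded in spite of the non-monotonicity of the Poincaré rank under ramification, and, hand in hand with this, extracting the uniform coefficient bound $nq$ for the ``moreover'' statement while keeping all transformations polynomial and dispensing with exponential shifts. Items (ii) and (iii) are more routine once the relevant structures are in place --- in (ii) everything reduces to the solvability of the homological equations, guaranteed by the block structure of $D(x)$ and the non-resonance of $C$; in (iii) the only real care needed is to control the orders of the higher-order terms under shearings so that the $(TRS)$-form is genuinely preserved.
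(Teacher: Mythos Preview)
Your treatment of items (ii) and (iii) matches the paper's approach in compressed form. For (ii) the paper executes your homological recursion in two explicit passes --- first killing off-diagonal blocks relative to the decomposition of $D(x)$ into distinct scalar pieces (by induction on $q$), then killing the diagonal blocks via the non-resonance of $C$ --- and for (iii) the paper's invariant $m(C)=\sum_j(\lambda^j_1-\lambda^j_{s_j})$ plays exactly the role of your resonance defect, with the same shear-by-$x$ mechanism.

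The genuine gap is in item (i), precisely where you flag the difficulty. Your well-foundedness argument hinges on the assertion that for a Moser-irreducible system with $q\ge1$ and single-eigenvalue leading coefficient, a ramification of the appropriate order \emph{necessarily} produces a new leading matrix with at least two distinct eigenvalues, so that a splitting --- and hence a drop in $n$ --- follows. This is not true in general, and you offer no justification. Concretely, when the first significant matrix $A_k$ has several Jordan blocks and the shearing order $g$ is realised by an entry lying in an \emph{off-diagonal} block (rather than inside a diagonal block), the post-ramification-and-shear leading matrix is block-lower-triangular with the very same nilpotent diagonal blocks $H^{(n_i)}$ and at least one nonzero sub-diagonal block --- hence nilpotent again. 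You are then back in the single-eigenvalue case with $n$ unchanged and $q$ larger, so the recursion on $n$ alone does not terminate. The paper follows Wasow rather than Moser and introduces the finer invariant
\[
I(A)=(\gamma_1(A_k),\dots,\gamma_n(A_k),\,q-k),
\]
where $k$ is the radiality index and $\gamma_i$ is the degree of the gcd of the $i\times i$ minors of $A_k-\lambda I$; a linear-algebra lemma of Wasow then shows that $I(A)$ strictly decreases lexicographically at each shearing step, \emph{including} the recurrent nilpotent case, and this is what actually grounds the induction and yields the truncation bound $N(n,q,k)=n(q-k)+k\le nq$. A second, related omission: since exponential shifts are forbidden, a single nonzero eigenvalue of the leading coefficient cannot simply be subtracted off; the paper handles this by systematically working with the first non-radial coefficient $A_k$ and carrying the scalar radial part $A_0,\dots,A_{k-1}$ through the whole process, a device your sketch does not set up.
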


As a consequence of the theorem above, one obtains the following version of Turrittin's formal normal forms of complex meromorphic linear ODEs

\begin{theorem}[Complex Formal Normal Form]\label{th:c-turrittin-formal}
	Suppose that $K$ is an algebraically closed field and let $A\in\MM_n(L_K)$ be a singular system with Poincaré rank equal to $q=q(A)$.
		There exists a formal gauge transformation $\Psi_P$ and a ramification $R_r$ such that the transformed system $F=(\Psi_P\circ R_r)[A]$ has Poinaré rank equal to $\tilde{q}=q(F)$ and can be written in a {\em Formal Normal Form}
		$$
		\hspace{-1cm}[F]\hspace{1cm} Y'=x^{-(\tilde{q}+1)}(D(x)+x^{\tilde{q}}C)Y,
		$$
		where $D(x)$ and $C$ satisfy the requirements in Definition~\ref{def:TRS-form}; i.e., $F$ is in $(TRS)^{\tilde{q}}_0$-form and $J_{\tilde{q}}F=F$. Moreover, the transformation $\Psi_P$ can be chosen to be equal to $\Psi_P= \Psi_Q\circ\psi$, where $Q\in\MM_n(K[[x]])$ with $Q(0)=I_n$  and $\psi$ is a finite composition of regular polynomial or diagonal monomial gauge transformations.
 \end{theorem}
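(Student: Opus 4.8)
The proof of the Complex Formal Normal Form theorem should essentially be a routine assembling of the three parts of Theorem~\ref{th:c-turrittin-poly}, combined with a limiting argument over the truncation degree $\mu$. Let me sketch this.

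\textbf{Plan.} Starting from a singular system $A$ with Poincaré rank $q$, apply Theorem~\ref{th:c-turrittin-poly}(i): there is a ramification $R_r$ and a finite composition $\psi_1 = \varphi_m\circ\cdots\circ\varphi_1$ of polynomial (regular or diagonal monomial) gauge transformations so that $\psi[A]$, with $\psi = \psi_1\circ R_r$, is in $(TRS)_0^{\tilde q}$-form, $\tilde q = q(\psi[A])$. Next apply part (iii): there is a further finite composition $\psi_2$ of regular polynomial or diagonal monomial gauge transformations bringing $(\psi_2\circ\psi)[A]$ into $(TRS)_0^{\tilde q}$-form with \emph{non-resonant} residual matrix and the same exponential part $D(x)$. (One needs to check the Poincaré rank does not change in this step; it does not, since these transformations act within the same $(TRS)_0^{\tilde q}$-class.) At this point absorb $R_r$ into the left factor and rename: writing $\psi' = \psi_2\circ\psi_1$ we have $(\psi'\circ R_r)[A] = \widehat A$ in $(TRS)_0^{\tilde q}$-form with non-resonant residual matrix.

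\textbf{The limiting argument.} Now invoke part (ii): for each $\mu\ge 0$ there is a regular polynomial gauge transformation $\phi_\mu = \Psi_{P^\mu}$ with $P^\mu(0)=I_n$, $\deg P^\mu \le \tilde q + \mu$, such that $\phi_\mu[\widehat A]$ is in $(TRS)_\mu^{\tilde q}$-form with the same principal part, and moreover the coherence condition $J_{\tilde q+\mu}P^{\mu'} = J_{\tilde q+\mu}P^\mu$ holds for $\mu' > \mu$. This coherence means the $P^\mu$ converge $x$-adically to a unique formal power series $Q\in\MM_n(K[[x]])$ with $Q(0)=I_n$, namely $Q = \lim_\mu P^\mu$ with $J_{\tilde q+\mu}Q = J_{\tilde q+\mu}P^\mu$. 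I would then argue that $\Psi_Q[\widehat A]$ equals its own principal part, i.e. $J_{\tilde q}\Psi_Q[\widehat A] = \Psi_Q[\widehat A]$ and this common value is $x^{-(\tilde q+1)}(D(x)+x^{\tilde q}C)$. The point is that for each $\mu$, the truncation $J_{\tilde q + \mu}\Psi_Q[\widehat A]$ depends only on $J_{\tilde q+\mu+?}Q$ and $J_{\tilde q+\mu+?}\widehat A$ — since $\widehat A$ has Poincaré rank $\tilde q$, the coefficient of $x^{-(\tilde q + 1) + j}$ in $\Psi_Q[\widehat A]$ is a polynomial expression in the first $j+1$ coefficients of $Q, Q^{-1}, Q'$ and of $x^{\tilde q+1}\widehat A$ — so it agrees with the corresponding truncation of $\phi_\mu[\widehat A]$ for $\mu$ large enough, which by part (ii) has vanishing coefficients beyond degree $\tilde q$ up to order $\mu$. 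Letting $\mu\to\infty$ kills all terms of order $> -(\tilde q+1)+\tilde q = -1$ beyond the principal part, so $F := \Psi_Q[\widehat A] = (\Psi_Q\circ\psi'\circ R_r)[A]$ is in Formal Normal Form. Setting $\Psi_P := \Psi_Q\circ\psi'$ (so $\Psi_P\circ R_r = \Psi_Q\circ\psi'\circ R_r$) and noting $\psi = \psi'$ in the statement's notation gives the conclusion, including the claimed structure $\Psi_P = \Psi_Q\circ\psi$.

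\textbf{Main obstacle.} The only genuinely delicate point is the passage to the limit: one must verify that the $x$-adic limit $Q$ of the $P^\mu$ exists (immediate from the coherence $J_{\tilde q+\mu}P^{\mu'}=J_{\tilde q+\mu}P^\mu$) \emph{and}, more importantly, that the gauge action is ``continuous'' in the appropriate sense, so that $\Psi_Q[\widehat A]$ inherits, for every order, the vanishing of higher-order terms established at finite level by $\phi_\mu$. This is where it matters that $\widehat A$ has a fixed finite Poincaré rank $\tilde q$: each Laurent coefficient of $\Psi_Q[\widehat A]$ is determined by finitely many coefficients of $Q$, so the finite-level identities $J_{\tilde q+\mu}\phi_\mu[\widehat A] = J_{\tilde q}\phi_\mu[\widehat A] = J_{\tilde q}\widehat A$ (same principal part) stabilize correctly in the limit. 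Everything else is bookkeeping: checking that composing the transformations from (i) and (iii) keeps us in the relevant $(TRS)_0$-class, that Poincaré ranks behave as asserted, and that the various factors are of the stated type (polynomial/regular/diagonal monomial for $\psi$, formal with value $I_n$ at $0$ for $Q$).
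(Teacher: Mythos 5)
Your proposal is correct and is essentially the argument the paper has in mind: the paper states Theorem~\ref{th:c-turrittin-formal} as a direct consequence of Theorem~\ref{th:c-turrittin-poly} without writing out the deduction, and your chaining of (i), then (iii), then (ii), followed by passing to the $x$-adic limit via the coherence $J_{\tilde{q}+\mu}P^{\mu'}=J_{\tilde{q}+\mu}P^{\mu}$, is exactly the intended route. Your treatment of the one delicate point (that each Laurent coefficient of $\Psi_Q[\widehat{A}]$ is determined by finitely many coefficients of $Q$, so the finite-level vanishing supplied by $\phi_\mu$ stabilizes in the limit) is the right justification.
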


\begin{rks}\label{rks}
	{\em Concerning the statements in Theorem~\ref{th:c-turrittin-poly} and Theorem~\ref{th:c-turrittin-formal}, we have the following.
	\begin{enumerate}[(a)]
	\item The sufficient truncation order $nq$ in the second sentence of item (i) is already obtained for instance by Babbit-Varadarajan \cite{Bab-V} or Lutz-Sch\"afke \cite{Lut-S}.  Below, we propose a proof with the slightly improved order $N:=n(q-k)+k$, where $k$ is the radial index of the initial system $A$.
		\item  To obtain the formal normal form $[F]$ for the system $A$ in Theorem~\ref{th:c-turrittin-formal} is equivalent to say that there exists a matrix $P(t)\in\mathcal{M}_n(K[[t]])$ and some $r\in\N_{\ge 1}$ such that
\begin{equation}\label{eq:f-matrix-solutions}
	Z(x)=P(x^{1/r})\exp\left(
	\int\frac{D(x^{1/r})}
	{x^{(\tilde{q}+1))/r}}
	\right)x^\frac{C}{r}
\end{equation}
	is a {\em fundamental matrix of formal solutions} of the system $A$.
	\item Another consequence of the expression (\ref{eq:f-matrix-solutions}) is that the ratio $\tilde{q}/r$ and the exponential part $D(x)$, modulo ramification of $x$, are both invariant under formal meromorphic gauge transformations. More precisely, if we have two systems $A$ and $B$ such that $B=\Psi_T[A]$ with $T\in GL_n(K[[x]][x^{-1}])$ and we get (TRS)-normal forms of $A$ and $B$ as in item (i) of Theorem~\ref{th:c-turrittin-poly} with resulting Poincaré ranks $\tilde{q}_A$ and $\tilde{q}_B$ and exponential parts $D_A(x)$ and $D_B(x)$, respectively, then there are integers $r_1,r_2$ such that $\tilde{q}_A/r_1=\tilde{q}_B/r_2$ and $D_A(x^{1/r_1})=D_B(x^{1/r_2})$. In particular, $\tilde{q}=0$ iff the system $A$ is equivalent to a system of first kind under a formal gauge transformation. 
	\item In the proof proposed below, one could see that the sequence of gauge transformations used in items (i) or (iii) can be chosen so that any one of them, individually, do not increase the Poincaré rank of the system it applies to in the process. As we know, this observation only concerns the diagonal monomial gauge transformations, since a regular gauge transformations always preserves the Poincaré rank. 
	\item In the proof below, we propose a bound, in terms of the eigenvalues of the residual matrix $C$, for the degree of the transformation $\phi$ in item (iii). 
	\end{enumerate}
}
\end{rks}

%%%%
\subsection{The real case}\label{sec:statement-real}

Suppose that $K$ is a real closed field, i.e., $K\varsubsetneq K(i)=\overline{K}$, where $i=\sqrt{-1}$. 
Given $\lambda=a+bi\in\overline{K}$, denote by 
$$
\Lambda_{\lambda}=\left(
\begin{array}{cc}
	a & -b \\
	b & a
\end{array}
\right). 
$$
Recall that the characteristic polynomial of $\Lambda_\lambda$ has roots $a\pm bi$ and is irreducible if and only if $\lambda\not\in K$, i.e., $b\ne 0$. 
For any $m\in\N_{\ge 1}$, define the monomorphism of $K$-algebras
$$
\Theta_m:\mathcal{M}_m(\overline{K})\to\mathcal{M}_{2m}(K),
$$
sending a matrix $C=(c_{uv})\in\mathcal{M}_m(\overline{K})$ to the $(2\times 2)$-block matrix $(\Lambda_{c_{uv}})\in\mathcal{M}_{2m}(K)$. A square matrix in the image of $\Theta_m$ will be called a {\em complex matrix over $K$}, or a {\em $\C$-matrix}, for short.

We extend $\Theta_m$ to a monomorphism of $K$-algebras, denoted with the same letter, from $\MM_m(L_{\overline{K}})$ into $\MM_{2m}(L_K)$; that is,
from formal meromorphic linear systems over $\overline{K}$ to formal meromorphic linear systems over $K$ of double dimension. A system in the image of this map will be called a {\em complex system (over $K$)} or a {\em $\C$-system}.

In what follows, if $U,V$ are two square matrices of sizes $k,l$, respectively, we denote by $U\oplus V$ the square matrix of size $k+l$ given in blocks
$$
U\oplus V:=\left(\begin{array}{cc} U&0\\0&V\end{array}\right).
$$

\begin{definition}[Real Turrittin-Ramis-Sibuya form]\label{def:TRS-form-real}
	Suppose thar $K$ is a real closed field.
	Let $A\in\MM_n(L_K)$ be a system with Poincaré rank $q=q(A)$ and let $\mu\in\N_{\ge 0}$. We say that $A$ is in {\em Real Turrittin-Ramis-Sibuya form of degree $\mu$ (and of rank $q$)}, or {\em in $(\R TRS)_\mu^q$-form} for short, if it can be written in the form
	$$
	A(x)=x^{-(q+1)}\left(D_1(x)\oplus D_2(x)+x^{q}(C_1\oplus C_2)+O(x^{q+\mu+1})\right),
	$$ 
	where 
		\begin{itemize}
		\item $D_1(x)={\rm diag}(e_1(x),...,e_{n_1}(x))$ is diagonal polynomial with entries $e_j(x)\in K[x]$ of degree at most $q-1$ (equal to zero if $q=0$).
		\item $D_2(x)=\Theta_{n_2}\left({\rm diag}(d_1(x),...,d_{n_2}(x))\right)$ is a diagonal $2\times 2$-block complex matrix such that the entries $d_j(x)$ belong to $\overline{K}[x]\setminus K[x]$ and are of degree at most $q-1$ (equal to zero if $q=0$). 
		\item $C_1$ and $C_2$ are constant matrices with entries in $K$ of sizes $n_1$ and $2n_2$, respectively, and $C_2$ is a $\C$-matrix.
		\item $[D_1(x),C_1]=0$ and $[D_2(x),C_2]=0$.
	\end{itemize}
	 In this case, the truncated system $J_q(A)=x^{-(q+1)}(D_1(x)\oplus D_2(x)+x^q(C_1\oplus C_2))$ is called the {\em principal part} of $A$, while $D_1(x)\oplus D_2(x)$, resp. $C_1\oplus C_2$, is called the {\em exponential part}, resp. the {\em residual matrix}.
\end{definition}
%%%%%%%
%%%%

\strut

Our main result in the paper is the following real versions of Theorems~\ref{th:c-turrittin-poly} and \ref{th:c-turrittin-formal}.

\begin{theorem}[Real Polynomial Normal Forms]\label{th:r-turrittin-poly}
	Suppose that $K$ is a real closed field and let $A\in\MM_n(L_K)$ be a singular system with Poincaré rank equal to $q=q(A)$. 
	\begin{enumerate}[(i)]
		\item 	There exists  $r\in\N_{\ge1}$ and there exists finitely many polynomial gauge transformations $\vp_1,...,\vp_m$ (with coefficients in $K$), either regular or diagonal monomial, such that, denoting $$\psi=\vp_m\circ\cdots\circ\vp_1\circ R_r,$$ the transformed system $\wt{A}=\psi[A]$ is in $(\R TRS)_0^{\tilde{q}}$-form.	
		Moreover, if $B\in\MM_n(L_K)$ is another singular system with $q(B)=q$ and $J_{nq}A=J_{nq}B$ then, with the same transformation $\psi$, the transformed system $\wt{B}=\psi[B]$ is also in $(\R TRS)_0^{\tilde{q}}$ with the same principal part as $\wt{A}$, i.e.,
		$
		J_{\tilde{q}}\psi[A]=J_{\tilde{q}}\psi[B].
		$
		\item Assume that $A$ is in $(\R TRS)^q_0$-form and that its residual matrix is non-resonant. Then, for any $\mu\ge 0$ there exists a regular polynomial gauge transformation $\phi_\mu=\Psi_{P^\mu}$ where $P^\mu(0)=I_n$ such that $\phi_\mu[A]$ is in $(\R TRS)_\mu^{q}$-form and with the same principal part as the system $A$. Moreover, the family $\{P^\mu\}_\mu$ can be chosen such that $P^\mu$ is of degree at most $q+\mu$ and satisfying that $J_\mu P^{\mu'}=J_\mu P^{\mu}$ for any $\mu'>\mu$.
		\item Assume that $A$ is in $(\R TRS)^q_0$-form. Then there exists a polynomial gauge transformation $\phi$, given by a finite composition of regular or diagonal monomial transformations, such that $\phi[A]$ is in $(\R TRS)^q_0$-form  with non-resonant residual matrix (and the same exponential part as $A$). 
		\end{enumerate}
	\end{theorem}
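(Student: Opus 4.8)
The plan is to run, \emph{mutatis mutandis}, the proof of Theorem~\ref{th:c-turrittin-poly} over $\overline K$, replacing every step that splits the system according to the eigenvalues of a leading coefficient by a step that splits it according to the $\s$-orbits of those eigenvalues, where $\s\in\mathrm{Gal}(\overline K/K)$ is complex conjugation; equivalently, according to the monic irreducible factors over $K$ of the relevant characteristic polynomial. The conceptual reason this is legitimate is that a system $A\in\MM_n(L_K)$ is exactly a $\s$-invariant system in $\MM_n(L_{\overline K})$, and that $\s$ commutes with the derivation, commutes with ramifications ($\s\circ R_r=R_r\circ\s$) and intertwines gauge transformations ($\s\circ\Psi_P=\Psi_{\s P}\circ\s$); moreover the embedding $\Theta$ is a differential $K$-algebra monomorphism enjoying the \emph{same} compatibilities ($\Theta\circ R_r=R_r\circ\Theta$ and $\Theta(\Psi_P[A])=\Psi_{\Theta P}[\Theta A]$), so that working ``inside $\Theta$'' turns a complex block, a conjugate pair of exponents, a complex shearing, etc., into a $\C$-matrix block, a $\Theta$-entry, a diagonal monomial transformation over $K$, etc. Finally, the real Jordan form of a constant matrix (Turrittin~\cite{Tur3}) supplies, over $K$, the decomposition $C_1\oplus C_2$ with $C_2$ a $\C$-matrix demanded by Definition~\ref{def:TRS-form-real}.

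For item (i) I would follow the revised Wasow-type argument of Section~\ref{sec:proofs-complex}, inducting on the pair (Poincar\'e rank, size). At each stage one inspects the radial part. If the characteristic polynomial of the leading matrix has at least two distinct monic irreducible factors over $K$, a real Splitting Lemma (the usual splitting argument carried out over $K$, whose solvability is granted by the coprimality of the factors) produces a regular polynomial gauge transformation over $K$ block-diagonalizing the system, and one recurses on the summands. If that characteristic polynomial is a power of a single irreducible $\pi\in K[t]$ there are two cases: if $\pi=t-a$ is linear one subtracts $a\,x^{-(q+1)}I_n$, absorbed by the future exponential part $D_1(x)$, making the leading matrix nilpotent; if $\pi$ is quadratic with roots $\lambda=a\pm bi$, the leading matrix is conjugate over $K$ to a $\C$-matrix $\Theta_m(M)$ (with $2m=n$, $M$ having single eigenvalue $\lambda$), one subtracts the $\C$-matrix $x^{-(q+1)}\Theta_m(\lambda I_m)$, absorbed by $D_2(x)$, and the remaining leading part is $\Theta$ of a nilpotent. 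After this normalization one applies the shearing (diagonal monomial) transformation dictated by the Newton polygon and, when its slope is not an integer, the ramification $x=z^r$ that clears denominators; in the quadratic case all these operations are performed on the $\Theta$-preimage and then pushed forward, which keeps the $\C$-matrix shape. A ramification raises the Poincar\'e rank but strictly decreases the chosen invariant, so the procedure terminates at a system of first kind, which is in $(\R TRS)_0^0$-form after a constant real Jordan reduction. The rigidity statement for $B$ with $J_{nq}A=J_{nq}B$ follows exactly as in the complex case, because each transformation used is determined by a bounded jet of the system (the bound $N=n(q-k)+k$ of Remark~\ref{rks}(a) applies verbatim).

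For item (ii), assuming $A$ is in $(\R TRS)_0^q$-form with non-resonant residual, I would build $P^\mu=I_n+xP_1+\cdots+x^{q+\mu}P_{q+\mu}$ by solving, degree by degree, the same homological equations as in the complex proof: at each order the unknown coefficient must satisfy a linear equation whose operator is built from the principal part $x^{-(q+1)}(D_1(x)\oplus D_2(x)+x^q(C_1\oplus C_2))$ and which is invertible precisely because the residual matrix is non-resonant. All the data being over $K$ and the solution being unique, it lies in $\MM_n(K)$ (Galois descent), and the block structure together with the $\C$-matrix shape of the relevant sub-blocks is preserved because the operator respects them; the degree bound and $J_\mu P^{\mu'}=J_\mu P^\mu$ follow as in the complex case. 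For item (iii) I would mimic the complex reduction to non-resonant residual: decompose the residual matrix over $\overline K$ into generalized eigenspaces, and whenever two eigenvalues differ by a positive integer apply a diagonal monomial transformation shifting one group; since $A$ is real these resonant groups occur in $\s$-conjugate pairs, so the shift is effected by a genuine diagonal monomial transformation over $K$ (of the form $x^{k_1}I\oplus x^{k_2}I\oplus\cdots$ with blocks matching the real/$\C$ decomposition), which moreover commutes with $D_1(x)\oplus D_2(x)$ because the eigenspace blocks refine the constancy blocks of $D$. Finitely many iterations yield a non-resonant residual with the exponential part unchanged, and the degree of $\phi$ is bounded in terms of the differences of real parts of the eigenvalues of the residual matrix, as in Remark~\ref{rks}(e).

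The main obstacle I expect is item (i): threading the real/$\C$-matrix structure through the shearing-and-ramification loop while guaranteeing termination — precisely the point the authors note is insufficiently clarified in \cite{Was}. Concretely, one must verify that the ``single quadratic factor'' reduction genuinely realises the leading matrix as a $\C$-matrix to the precision needed, that the Newton-polygon shearings and the ramification commute with $\Theta$ so that the output is again a $\C$-system on the relevant block, and that the decreasing invariant closing the induction is insensitive to whether one works over $K$ or over $\overline K$. Once the bookkeeping conventions of (i) are fixed, items (ii) and (iii) should be essentially routine adaptations.
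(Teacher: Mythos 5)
Your overall strategy is indeed the paper's: split according to $K$-irreducible factors when possible, and when the leading non-radial coefficient has a single conjugate pair of eigenvalues in $\overline K\setminus K$, descend the complex argument to $K$ through the embedding $\Theta$ and the notion of $\C$-matrix/$\C$-system. The treatment of items (ii) and (iii) you sketch, via block structures compatible with the $\C$-matrix shape and Galois descent, is also consistent with the paper's.

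The gap is precisely the point you flag as ``the main obstacle'', and it is not a bookkeeping issue but the missing key lemma of the real case. In the quadratic-irreducible situation you conjugate $A_k$ into $\Theta_m(M)$ and then say that the shearings and ramifications ``are performed on the $\Theta$-preimage and then pushed forward''. But only the leading coefficient $A_k$ is a $\C$-matrix at that stage; nothing forces $A_{k+1},A_{k+2},\dots$ to lie in $\operatorname{Im}\Theta_m$, so the system has no $\Theta$-preimage and there is nothing to push forward. The paper resolves exactly this with Proposition~\ref{pro:key-for-real}: assuming $k<q$ and $\operatorname{Spec}(A_k)=\{\lambda,\bar\lambda\}$ with $\lambda\notin K$, there is a formal regular gauge $\Psi_T$ with $T\in\MM_n(K[[x]])$ that turns the \emph{entire} system into a $\C$-system, preserving the relevant jets. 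Its proof rests on the elementary Lemma~\ref{lm:C-matrices} (the adjoint $X\mapsto\Lambda_\lambda X-X\Lambda_\lambda$ on $\MM_2(K)$ surjects onto a complement of the $\C$-matrices), iterated column by column through the real Jordan block structure, in direct analogy with the proof of the Splitting Lemma. Without this propagation step the reduction to Theorem~\ref{th:c-turrittin-poly} is unjustified, and the same lemma is what the paper re-invokes in items (ii) and (iii) when handling the $\C$-radial blocks, so the ``routine adaptation'' claim there also silently relies on it.

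A secondary point: the ``subtract $a\,x^{-(q+1)}I_n$'' (resp.\ $x^{-(q+1)}\Theta_m(\lambda I_m)$) step you propose is an exponential shifting. The paper deliberately avoids these in order to keep the procedure finitely determined (it instead tracks the radiality index $k$ and carries the radial part along, with the improved truncation bound $N=n(q-k)+k$). Your version would give a correct existence statement but would not yield the jet-dependence (``moreover'' clause) in item~(i) as stated, nor the degree bounds in items~(ii)--(iii), without reworking those estimates.
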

\begin{theorem}\label{th:r-turrittin-formal}
Let $K$ be a real closed field and let $A\in\MM_n(L_K)$ be a singular system. Then there exists a formal gauge transformation $\psi_P$, with $P\in L_K$, and a ramification $R_r$ such that the transformed system $F^\R=(\psi_P\circ R_r)[A]$ has Poincaré rank equal to $\tilde{q}$ and is written as:
		$$
		\hspace{-1cm}[F^\R]\hspace{1cm} Y'=x^{-(\tilde{q}+1)}(D_1(x)\oplus D_2(x)+x^{\tilde{q}}(C_1\oplus C_2))Y,
		$$
		where $D_1,D_2,C_1,C_2$ satisfy the conditions in Definition~\ref{def:TRS-form-real}; that is $F^\R$ is in $(\R TRS)$-form and $F^\R=J_{\tilde{q}}F^\R$.
		 Moreover, the transformation $\psi_P$ can be chosen to be equal to $\Psi_P=\Psi_Q\circ\psi$, where $Q\in\MM_n(K[[x]])$ satisfies $Q(0)=I_n$ and $\psi$ is a finite composition of regular polynomial or diagonal monomial gauge transformations (with coefficients in $K$).
		%%%%
	%
	\end{theorem}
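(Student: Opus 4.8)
The plan is to deduce the formal normal form directly from the three parts of Theorem~\ref{th:r-turrittin-poly}, in complete analogy with the way Theorem~\ref{th:c-turrittin-formal} follows from Theorem~\ref{th:c-turrittin-poly} in the complex case. First I would apply item (i) of Theorem~\ref{th:r-turrittin-poly}: it provides $r\in\N_{\ge1}$ and finitely many polynomial gauge transformations $\vp_1,\dots,\vp_m$ (regular or diagonal monomial, with coefficients in $K$) such that $\vp_m\circ\cdots\circ\vp_1$ takes $R_r[A]$ into $(\R TRS)_0^{\tilde q}$-form, where $\tilde q$ is the Poincar\'e rank of the resulting system. I would then apply item (iii) to this system: it provides a further finite composition $\phi$ of regular or diagonal monomial polynomial gauge transformations (over $K$) that brings it to a system $A'$ still in $(\R TRS)_0^{\tilde q}$-form, now with non-resonant residual matrix and the same exponential part. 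Setting $\psi:=\phi\circ\vp_m\circ\cdots\circ\vp_1$, a finite composition of regular polynomial or diagonal monomial gauge transformations over $K$, we have $A'=\psi[R_r[A]]$.

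Next I would invoke item (ii) of Theorem~\ref{th:r-turrittin-poly} applied to $A'$, which is legitimate because the residual matrix of $A'$ is non-resonant: for every $\mu\ge0$ there is a regular polynomial gauge transformation $\Psi_{P^\mu}$ with $P^\mu(0)=I_n$ and $\deg P^\mu\le\tilde q+\mu$ such that $\Psi_{P^\mu}[A']$ is in $(\R TRS)_\mu^{\tilde q}$-form with the same principal part $J_{\tilde q}A'$ as $A'$, and in addition $J_\mu P^{\mu'}=J_\mu P^\mu$ for all $\mu'>\mu$. This coherence condition means that the $P^\mu$ stabilize degree by degree, so they converge $x$-adically to a well-defined $Q\in\MM_n(K[[x]])$ with $J_\mu Q=J_\mu P^\mu$ for every $\mu$ and $Q(0)=I_n$; in particular $Q\in GL_n(L_K)$ and $\Psi_Q$ is a formal gauge transformation over $K$. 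Now, on the one hand $\Psi_{P^\mu}[A']\to J_{\tilde q}A'$ $x$-adically, because $\Psi_{P^\mu}[A']$ agrees with the fixed matrix $J_{\tilde q}A'=x^{-(\tilde q+1)}(D_1(x)\oplus D_2(x)+x^{\tilde q}(C_1\oplus C_2))$ modulo a term of order $O(x^{\mu})$; on the other hand $\Psi_{P^\mu}[A']\to\Psi_Q[A']$ $x$-adically, by continuity of the gauge action $P\mapsto\Psi_P[A']$ at $P=Q$ (using $Q(0)=I_n$). By uniqueness of $x$-adic limits we obtain $\Psi_Q[A']=J_{\tilde q}A'$; that is, the transformed system $F^\R:=\Psi_Q[A']$ equals its own principal part, $F^\R=J_{\tilde q}F^\R$, and is in $(\R TRS)$-form with $D_1,D_2,C_1,C_2$ as in Definition~\ref{def:TRS-form-real}.

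Finally, since a composition of gauge transformations is again a gauge transformation, there is an invertible matrix $P$ (namely the product of the matrices implementing $\psi$ with $Q$) such that $\Psi_P=\Psi_Q\circ\psi$, and by construction $F^\R=(\Psi_P\circ R_r)[A]$ with $Q\in\MM_n(K[[x]])$, $Q(0)=I_n$, and $\psi$ a finite composition of regular polynomial or diagonal monomial gauge transformations over $K$ --- exactly the conclusion of the theorem. The main obstacle is the limiting argument in the second step: one has to set up carefully the two $x$-adic convergences above (in particular the continuity estimate for the gauge action when the system has negative order $-(\tilde q+1)$), so that the stabilization $J_\mu P^{\mu'}=J_\mu P^\mu$ guaranteed by item (ii) of Theorem~\ref{th:r-turrittin-poly} indeed forces $\Psi_Q[A']$ to be genuinely polynomial (equal to its principal part) and not merely formally close to it. Everything else is routine bookkeeping of the compositions and ramifications already supplied by Theorem~\ref{th:r-turrittin-poly}.
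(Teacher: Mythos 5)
Your proposal is correct and is essentially the route the paper intends (the paper states the formal theorem follows from the polynomial one but leaves the derivation to the reader). Applying (i), then (iii), then (ii) of Theorem~\ref{th:r-turrittin-poly} in that order is right, and the coherence condition $J_\mu P^{\mu'}=J_\mu P^\mu$ does make the $P^\mu$ an $x$-adic Cauchy sequence in $\MM_n(K[[x]])$, producing the formal $Q$ with $Q(0)=I_n$; your two-sided limit comparison then gives $\Psi_Q[A']=J_{\tilde q}A'$. The only thing I would sharpen: the continuity estimate you flag as the ``main obstacle'' is indeed mildly degraded by the pole, since $J_\mu P^\mu=J_\mu Q$ only forces $\Psi_{P^\mu}[A']-\Psi_Q[A']=O(x^{\mu-\tilde q})$ (the conjugation term $P^{-1}A'P$ multiplies the $O(x^{\mu+1})$ discrepancy by $x^{-(\tilde q+1)}$), but that loss of $\tilde q$ orders is harmless as $\mu\to\infty$, so the argument closes. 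A small shortcut worth noting: the proof of item (ii) (paragraph~\ref{sec:R-item-(ii)}, via the complex case in~\ref{sec:c-thm-ii}) actually constructs the full formal matrix $Q=UT\in\MM_n(K[[x]])$ directly and only afterwards defines the $P^\mu$ as its truncations; reading that proof one can take $Q$ off the shelf without going through the limiting argument, which makes the deduction of Theorem~\ref{th:r-turrittin-formal} immediate. Your version, which uses only the \emph{statement} of (ii) and reconstitutes $Q$ from the coherent truncations, is a perfectly valid and slightly more self-contained alternative.
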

%%
%%%%%%%%

\section{Proof of the Complex Turrittin's Theorem}
\label{sec:proofs-complex}

Let $A(x)\in\MM_n(L_K)$ by a system with Poincaré rank $q=q(A)$, written as in (\ref{eq:system-develop}). Let $k=k(A)$ be the radiality index.
Since the radial part $A_0+xA_1+\cdots+x^{k-1}A_{k-1}$ is preserved by any gauge transformation, the coefficient $A_k$ is considered as the {\em first significant} matrix of the system. This must be compared with the usual proofs of Turrittin's theorem, where the radial part is ruled out by an {\em exponential shifting} so that $A_k$ becomes the new leading coefficient (and $q$ drops to $q-k$). In our approach, where we stress the finitely determined nature of the transformations, we do not allow the use of exponential shifting, so that the radial part is carried all along the procedure.

Denote $N(n,q,k)=n(q-k)+k$ as in Remark~\ref{rks}, (a) and consider the statement (i)' to be the same as item (i) in Theorem~\ref{th:c-turrittin-poly} but substituting in the second part the truncation order $nq$ by $N(n,q,k)$.

For the proof of items (i)'-(iii) of Theorem~\ref{th:c-turrittin-poly}, we perform, a priori in arbitrary ordering, several  ramifications, or regular polynomial or diagonal monomial gauge transformations. The desired expression of the composition of those transformations required in the different items of the statement will be a consequence of the following lemma, whose proof is straightforward.

\begin{lemma}\label{lm:commutation-transformations}
Fix any field $\kappa$. Let $r\in\N_{\ge 1}$.  Let $P(x)\in\MM_n(\kappa[x])$ and let $\Psi_P$ be the associated polynomial gauge transformation. Then there exists another polynomial gauge transformation $\Psi_{\tilde{P}}$ satisfying $R_r\circ\Psi_P=\Psi_{\tilde{P}}\circ R_r$. In fact, we can take $\tilde{P}(x):=P(x^r)$. In particular, $\Psi_P$ is regular or diagonal monomial iff $\Psi_{\tilde{P}}$ is so.
\end{lemma}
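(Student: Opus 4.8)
The plan is to verify the identity $R_r\circ\Psi_P=\Psi_{\tilde P}\circ R_r$ by a single direct computation, the only real input being the chain rule for the substitution $x\mapsto x^r$. First I would record that $\tilde P(x):=P(x^r)$ is again a matrix with polynomial entries in $x$ (substituting $x^r$ into a polynomial produces a polynomial) and that it lies in $GL_n(L_\kappa)$: indeed $\det\tilde P(x)=(\det P)(x^r)$ is a nonzero element of $L_\kappa$ because $\det P\neq 0$, and its inverse is $\tilde P(x)^{-1}=P(x^r)^{-1}$. Hence $\Psi_{\tilde P}$ is a legitimate polynomial gauge transformation, so that both $R_r\circ\Psi_P$ and $\Psi_{\tilde P}\circ R_r$ are well-defined maps $\MM_n(L_\kappa)\to\MM_n(L_\kappa)$.

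Next I would carry out the computation on an arbitrary $A\in\MM_n(L_\kappa)$. Put $B=\Psi_P[A]=P^{-1}AP-P^{-1}P'$. By the definition of the ramification,
\[
R_r[B](x)=rx^{r-1}B(x^r)=rx^{r-1}\Bigl(P(x^r)^{-1}A(x^r)P(x^r)-P(x^r)^{-1}P'(x^r)\Bigr).
\]
The chain rule gives $\tilde P'(x)=rx^{r-1}P'(x^r)$, so the second summand equals $\tilde P(x)^{-1}\tilde P'(x)$; in the first summand one recognizes $P(x^r)=\tilde P(x)$ and $rx^{r-1}A(x^r)=R_r[A](x)$. Consequently
\[
R_r[B](x)=\tilde P(x)^{-1}R_r[A](x)\tilde P(x)-\tilde P(x)^{-1}\tilde P'(x)=\Psi_{\tilde P}\bigl[R_r[A]\bigr](x),
\]
which is precisely the asserted commutation relation.

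Finally, for the last sentence I would track how $x\mapsto x^r$ acts on the two privileged types. If $\Psi_P$ is regular, then $\nu(\tilde P)=r\,\nu(P)=0$ and $\tilde P(0)=P(0)$, so $\det\tilde P(0)\neq 0$ and $\Psi_{\tilde P}$ is regular; conversely, since $\nu(P)=\nu(\tilde P)/r$ and $P(0)=\tilde P(0)$, regularity of $\Psi_{\tilde P}$ forces that of $\Psi_P$. If $P={\rm diag}(x^{k_1},\dots,x^{k_n})$ with $k_j\in\N_{\ge0}$, then $\tilde P={\rm diag}(x^{rk_1},\dots,x^{rk_n})$ is again diagonal monomial; conversely, if $\tilde P(x)=P(x^r)$ is diagonal monomial then the off-diagonal entries of $P$ vanish and each diagonal entry $p_{jj}(y)$ satisfies $p_{jj}(y^r)=y^{\ell_j}$, which forces $r\mid\ell_j$ and $p_{jj}(y)=y^{\ell_j/r}$, so $P$ is diagonal monomial too. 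I do not foresee a genuine obstacle anywhere in this argument: the only places that call for a moment's care are the bookkeeping of the chain-rule factor $rx^{r-1}$ and the (trivial) check that $\tilde P\in GL_n(L_\kappa)$, which is exactly why the paper can call the proof straightforward.
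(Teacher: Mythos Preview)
Your proof is correct and is exactly the direct verification the paper has in mind; the paper itself gives no details beyond declaring the proof ``straightforward,'' and your computation (chain rule for $\tilde P'(x)=rx^{r-1}P'(x^r)$, then matching terms) is the natural way to fill that in. The closing checks on regularity and the diagonal monomial case are also fine.
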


\strut
 Another important tool is the following result, known with the name of {\em Splitting Lemma}, which is valid for any given base field, algebraically closed or not. It permits to reduce the dimension of the system when the first significant  matrix has two disjoint subsets of eigenvalues. It is usually stated in the formal setting (see for instance \cite{Was,Bal,Bar2}), but it has a finitely determined nature in terms of truncations of the system.

 \begin{lemma}[Splitting Lemma]\label{lm:splitting}
 	Let $K$ be any field. With the same notations as above, if $k$ is the radiality index of the system $A$, assume that $k<q$ and that $A_k$ is conjugated to $A^{11}_k\oplus A^{22}_k$, where the characteristic polynomials $\chi_{A^{11}_k}(\lambda)$ and $\chi_{A^{22}_k}(\lambda)$ are coprime, both of positive degrees, say $n_1$ and $n_2$, respectively. Then there exists a formal regular gauge transformation $\Psi_T$, where $T\in\MM_n(K[[x]])$ satisfies $T(0)=I_n$, such that the transformed system $B=\Psi_T[A]$ writes as $B=B^{11}\oplus B^{22}$, where $B^{ii}\in\MM_{n_i}(L_K)$ is a system of dimension $n_i$ for $i=1,2$. Moreover, $q(B)=q$, $k(B)=k$ and, writing $B(x)=x^{-(q+1)}(\sum_{j\ge 0}x^j B_j)$, we have $A_j=B_j$ for $j=0,1,...,k$ and for any $m>k$, the truncation $J_mB$ only depends on $J_{m-k}T$ and $J_mA$. In other words, if $\wt{A}$ is another system with the same Poincaré rank $q(\wt{A})=q$ and satisfying $J_m\wt{A}=J_m A$ then $J_m\Psi_{T^{(m-k)}}[\wt{A}]=J_mB$, where $T^{(m-k)}=J_{m-k}T$.
 \end{lemma}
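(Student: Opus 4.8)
The plan is to seek the transformation $T$ in block form $T = I_n + \sum_{j\ge 1} x^j T_j$ with off-diagonal unknowns and set up a recursion on the coefficients $T_j$, forcing the transformed system $B = \Psi_T[A]$ to be block-diagonal. I will first reduce to the case where $A_k$ is already in the split form $A_k^{11}\oplus A_k^{22}$, by applying a constant conjugation (a regular gauge transformation $\Psi_{T_0}$ with $T_0$ constant invertible), which preserves $\nu(A)$, $q$, $k$, and the radial part, and has no effect on the later finite-determinacy bookkeeping beyond a harmless relabeling. From now on write everything in $2\times 2$ block notation adapted to the decomposition $n = n_1 + n_2$, so $A = x^{-(q+1)}\sum_{j\ge 0} x^j A_j$ with $A_k = A_k^{11}\oplus A_k^{22}$, and look for $T = I_n + \sum_{j\ge 1} x^j T_j$ where each $T_j$ is purely off-diagonal, i.e. $T_j = \begin{pmatrix} 0 & T_j^{12}\\ T_j^{21} & 0\end{pmatrix}$.

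The key computation is to expand $B = T^{-1} A T - T^{-1} T'$ and collect the coefficient of $x^{-(q+1)+m}$ for each $m$. Because $q+1 > k+1$ (as $k<q$), the derivative term $T^{-1}T'$ contributes at order $x^{m-1}$ relative to $A$, hence at a strictly higher power of $x$ than the leading singular part; the dominant balance at the relevant order is the purely algebraic commutator term. Writing $\mathcal L(S) := A_k^{11} S - S A_k^{22}$ (and its transpose block) for the Sylvester operator acting on the off-diagonal blocks, one finds that the requirement "$B$ has zero off-diagonal block at order $m$" becomes, for each $m \ge k+1$, an equation of the shape
$$
\mathcal L(T_{m-k}^{12}) = (\text{polynomial in } A_0,\dots,A_m \text{ and } T_1,\dots,T_{m-k-1}),
$$
and symmetrically for $T_{m-k}^{21}$. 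Here is where coprimality of $\chi_{A_k^{11}}$ and $\chi_{A_k^{22}}$ enters: it guarantees that $A_k^{11}$ and $A_k^{22}$ have no common eigenvalue over $\overline K$, so $\mathcal L$ is invertible on the space of $n_1\times n_2$ matrices over $K$ (the Sylvester equation $A_k^{11} X - X A_k^{22} = Y$ has a unique solution in $\MM_{n_1\times n_2}(K)$ — note this holds over the non-algebraically-closed $K$ as well, since invertibility of $\mathcal L$ is detected by its determinant, which is a resultant of the two characteristic polynomials and is nonzero in $K$). Thus each $T_{m-k}$ is uniquely and recursively determined, with entries in $K$; the recursion starts at $T_1 = \cdots$ determined from the order $x^{-(q+1)+(k+1)}$ equation, and the formal series $T$ so obtained lies in $\MM_n(K[[x]])$ with $T(0) = I_n$. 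Having killed all off-diagonal coefficients, the diagonal blocks define $B^{11}, B^{22}$, and one reads off $q(B) = q$ and $k(B)=k$ directly since the leading and radial coefficients were untouched.

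The finite-determinacy claims then come for free from the shape of the recursion. Since $T_{m-k}$ depends only on $A_0,\dots,A_m$ and $T_1,\dots,T_{m-k-1}$, an immediate induction shows $T_j$ depends only on $J_{j+k}A$; hence $J_m B$ depends only on $J_m T$ — equivalently on $J_{m-k}T$ and $J_m A$ — and the stated comparison with $\widetilde A$ having $J_m\widetilde A = J_m A$ follows, because replacing $T$ by its truncation $T^{(m-k)} = J_{m-k}T$ changes $\Psi_T[\widetilde A]$ only in terms of order $> x^{-(q+1)+m}$. The one point needing a small verification is that $A_j = B_j$ for $j = 0,\dots,k$: for $j<k$ this is just preservation of the radial part, and for $j=k$ it holds because the first off-diagonal correction $T_1$ enters $B$ only at order $k+1$ — the derivative term shifts it up and the commutator with the (scalar, on the radial part) leading matrix could contribute, but the radial coefficients $A_0,\dots,A_{k-1}$ are scalar multiples of $I_n$ and hence commute with everything, so no off-diagonal term is generated before order $k+1$.

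I expect the main obstacle to be purely organizational: writing down the order-$m$ coefficient of $T^{-1}AT - T^{-1}T'$ cleanly enough to exhibit the Sylvester operator on the left and a polynomial in already-determined data on the right, while simultaneously tracking exactly which truncation of $A$ and $T$ each term sees. The conceptual content — invertibility of $\mathcal L$ over $K$ from coprimality of the characteristic polynomials, and the strict order shift contributed by the $T^{-1}T'$ term thanks to $k<q$ — is short; the bookkeeping that converts it into the precise finite-determinacy statement is the part that must be done carefully.
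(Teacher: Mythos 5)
Your proof is correct and is essentially the standard argument the paper has in mind: the paper does not spell out a proof of the Splitting Lemma itself (it refers to Wasow, Balser, Barkatou), but it states Lemma~\ref{lm:linear-algebra} as ``the core of the proof of the Splitting Lemma,'' and the recursion you set up --- off-diagonal $T_j$, with the equation for $B_m=0$ off-diagonal reading $[A_k,T_{m-k}]^{12}=\text{(known data)}$, i.e.\ a Sylvester equation $A_k^{11}X-XA_k^{22}=Y$ solved by invertibility over $K$ --- is exactly the mechanism the paper deploys verbatim in the proof of Proposition~\ref{pro:key-for-real} (equation (\ref{eq:Bj-in-real})) and in Step~1 of paragraph~\ref{sec:c-thm-ii} (equations (\ref{eq:recursive-splitting})). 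Your justification that the Sylvester operator is invertible over the base field $K$ (determinant equal, up to sign, to the resultant of the two coprime characteristic polynomials) is a perfectly valid substitute for the paper's invocation of Lemma~\ref{lm:linear-algebra}, and your tracking of which jets are consumed --- the radial coefficients $A_0,\dots,A_{k-1}$ commute with everything so only $A_k,\dots,A_m$ and $T_1,\dots,T_{m-k}$ enter $B_m$; the derivative term $T^{-1}T'$ hits $B_m$ only for $m\ge q+1$ and then involves only $T_j$ with $j\le m-q<m-k$ --- is exactly the bookkeeping needed to get the finite-determinacy refinement $J_mB$ depends on $J_{m-k}T$ and $J_mA$. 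One minor stylistic point: the phrase ``$J_mB$ depends only on $J_mT$'' overstates the dependence (only $J_{m-k}T$ is needed), but you immediately correct this with the ``equivalently'' clause, so the final claim is accurate.
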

 
\subsection{Proof of Theorem~\ref{th:c-turrittin-poly}, (i)'. Getting a $(TRS)$-form of degree $0$ and some rank $\tilde{q}$}\label{sec:c-thm-i}

First, notice that the cases $q=0$ and $q=k$ (the former being a particular case of the later, by definition) are trivial: in these cases, the system is already in $(TRS)_0^q$-form.

We proceed by induction on the dimension $n$ of the system. The starting case $n=1$ is also trivial. Assume then that $n>1$.

\subsubsection{Case with different eigenvalues}\label{sec:case-diff-eigenv}

Suppose that we are in the case where $A_k$ has at least two different eigenvalues. Then we can reduce to a smaller dimension as follows. First, up to a constant regular gauge transformation we can assume that $A_k=A^{11}_k\oplus A^{22}_k$ where $A^{11}_k$ and $A^{22}_k$ are matrices of respective sizes $n_1, n_2$, both smaller than $n$, and having no common eigenvalue. Using Lemma~\ref{lm:splitting} for $N=N(n,q,k)$, there exists a regular polynomial gauge transformation $\Psi_P$ such that the $N$-truncation of $B=\Psi_P(A)$ is written as 
$J_NB=B^{11}\oplus B^{22}$, where, for $i=1,2$, $B^{ii}$ is a system of dimension $n_i$. Moreover, $J_NB$ has the same Poincaré rank, the same radiality index and the same $k$-truncation than $A$. In particular, if $q_i=q(B^{ii})$ and $k_i=k(B^{ii})$ then $q_i\leq q$ and $q_i-k_i\le q-k$. Taking into account that $n_1$ and $n_2$ are both positive and hence strictly smaller than $n$, we obtain for $i=1,2$ that
$$
N(n,q,k)\ge n_i(q-k)+q\ge n_i(q_i-k_i)+q_i\ge n_i(q_i-k_i)+k_i=N(n_i,q_i,k_i).
$$
Using the induction hypothesis to system $B^{ii}$ for $i=1,2$, there is a finite composition $\psi^{ii}$ of transformations in dimension $n_i$ (as in statement (i)) such that $\wt{B}^{ii}=\psi^{ii}(B^{ii})$ is in $(TRS)_0^{q_i}$-form and such that the second part of statement (i)' holds for the truncation order $N(n_i,q_i,k_i)$ in the place of $n_iq_i$. Moreover, in the composition $\psi^{ii}$ there is but a single ramification $R_{r_i}$ (with $r_i\in\N_{\ge 1}$, including the case $R_{r_i}=id$ ($r_i=1$). Now, for $\{i,j\}=\{1,2\}$ write, using Lemma~\ref{lm:commutation-transformations},
$$
R_{r_j}\circ\psi^{ii}=\wt{\vp}^{ii}\circ R_{r_1r_2},
$$
where $\wt{\vp}^{ii}$ is a composition of transformations in dimension $n_i$, either regular polynomial or diagonal monomial (that is, no ramification). Notice that the composition $R_{r_j}\circ\psi^{ii}$ satisfies the requirements of Theorem~\ref{th:c-turrittin-poly}, (i)' for the system $R_{r_j}(B^{ii})$, for which the Poincaré rank and radiality index are equal to $r_jq_i$ and $r_jk_i$, respectively. Writting $\wt{\vp}^{ii}=\Psi_{Q_i}$, where $Q_i\in\MM_{n_i}(K[x])$, we put $r=r_1r_2$ and define
$$
\psi=\Psi_{Q_1\oplus Q_2}\circ R_{r}\circ\Psi_P=\vp\circ R_r,
$$
where $\vp=\Psi_{Q_1\oplus Q_2}\circ\Psi_{P(x^r)}$ is a composition of gauge transformations, either regular polynomial or diagonal monomial. We check that $\psi$ satisfies the requirements of statement (i)' for the initial system $A$. To be convinced, we need to observe two facts. In one hand, for $\{i,j\}=\{1,2\}$, the composition $R_{r_j}\circ\psi^{ii}$ satisfies all the requirements of (i)' for the system $B^{ii}$, since so does $\psi^{ii}$ by construction (notice that if $B$ is any system then we have $q(R_r(B))=rq(B)$, $k(R_r(B))=rk(B)$ and for any $M\ge 0$, the truncation $J_{rM}R_r(B)$ is univocally determined by $J_MB$). On the other hand, use the second part of Lemma~\ref{lm:splitting} to conclude that $J_N\Psi_P(A)$ only depends on $J_NA$ for $N=N(n,q,k)$ and the inequality $N(n_i,q_i,k_i)\le N$ for $i=1,2$ proved above.

\subsubsection{Case with a single eigenvalue}

Suppose now that $A_k$ has a single eigenvalue $\lambda_k\in\overline{K}$. Recall that we are assuming that $K=\overline{K}$. So, up to a constant gauge transformation, we may suppose that $A_k$ is in Jordan normal form. Explicitly, there exists a (unique) sequence $1\le n_1\le n_2\le\cdots\le n_\ell\le n$ with $n=n_1+\cdots+n_\ell$ such that 
\begin{equation}\label{eq:jordan-Ak}
A_k=(\lambda_k I_{n_1}+H^{(n_1)})\oplus\cdots\oplus(\lambda_k I_{n_\ell}+H^{(n_\ell)}),
\end{equation}
where  
$$
H^{(n_j)}=\left(\begin{array}{ccccc}
0 & 1 & & \cdots & 0\\
0 & 0 & 1 & \cdots & 0\\
 &  & \vdots & & \\
0 & 0 & \cdots & 0 & 1\\
0 & 0 & \cdots & 0 & 0	
\end{array}\right)_{n_j\times n_j}
$$
(each such matrix will be called in the sequel a {\em shifting matrix}). Notice also that, since $A_k$ is not a radial matrix, we have $n_j>1$ for at least one index $j$.

 We divide the proof in different steps.

\vspace{.2cm}

{\em Step 1. The tuple $I(A)$.-} In the situation above, for $i=1,...,n$, denote by $\gamma_i(A_k)\in\N_{\ge 0}$ the degree, as a polynomial in $\lambda$, of the g.c.d. of the family of all $i\times i$ minors of the characteristic matrix $A_k-\lambda I_n$. In particular $\g_n(A_k)=n$.
For such a system $A$ (when $A_k$ has a single eigenvalue), we define the following tuple of non-negative integer numbers
$$
I(A):=(\gamma_1(A_k),\cdots,\gamma_n(A_k),q-k).
$$

Remark that each $\gamma(A_k)$ depends only on the conjugation class of $A_k$. We need to recall also the following result on Linear Algebra (see in Wasow \cite[Lemma 19.4]{Was} for a proof) concerning the behaviour of the values $\gamma_i$ for a perturbation of the matrix $A_k$ in the case where $A_k$ has at least two blocks.

\begin{lemma}\label{lm:gamma-i}
Consider $A_k$ as a block-diagonal Jordan matrix $A_k=\left(A_k^{(ij)}\right)$, where the diagonal blocks are given by $A_k^{(ii)}=\lambda_kI_{n_i}+H^{(n_i)}$. Assume that $\ell\ge 2$. With the same block structure, let $G=\left(G^{(ij)}\right)\in\MM_n(K)$ be a block-lower-triangular matrix with the same diagonal blocks as $A_k$ (that is, $G^{(ii)}=A_k^{(ii)}$ and $G^{(ij)}=0$ if $i<j$). Then we have 
$$
\gamma_t(G)\le\gamma_t(A_k)\;\;\;\mbox{ for any }t\in\{1,2,...,n\}
$$ 
and the inequality is strict for at least one $t$ if $G\ne A_k$ (that is, $G^{(ij)}\ne 0$ for at least a pair $(i,j)$ with $i>j$).
\end{lemma}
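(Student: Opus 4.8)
The plan is to translate the statement into the language of determinantal divisors and Jordan types of nilpotent matrices, to settle the inequality by a rank comparison adapted to the block-triangular shape, and to handle the strictness separately.

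\emph{Reduction.} Since $A_k$ and $G$ are block-lower-triangular with nilpotent diagonal blocks $H^{(n_i)}$, both $N_A:=A_k-\lambda_kI_n$ and $N_G:=G-\lambda_kI_n$ are nilpotent. Putting $\mu=\lambda-\lambda_k$, the characteristic matrices become $N_A-\mu I_n$ and $N_G-\mu I_n$; all their $t\times t$ minors are, up to units in $K$, powers of $\mu$, and $\gamma_t(A_k)$ (resp. $\gamma_t(G)$) is the $\mu$-order of the $t$-th determinantal divisor of $N_A-\mu I_n$ (resp. $N_G-\mu I_n$). The classical correspondence between invariant factors and Jordan blocks gives $\gamma_t(M)=n-\big(\pi^{(M)}_1+\cdots+\pi^{(M)}_{n-t}\big)$, where $\pi^{(M)}=(\pi^{(M)}_1\ge\pi^{(M)}_2\ge\cdots)$ is the partition of $n$ recording the sizes of the Jordan blocks of $N_M$ (with $\pi^{(M)}_j:=0$ beyond the last block). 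Thus ``$\gamma_t(G)\le\gamma_t(A_k)$ for all $t$'' is precisely the assertion that $\pi^{(G)}$ dominates $\pi^{(A_k)}$ in the dominance order on partitions of $n$.

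\emph{The inequality.} For a nilpotent $N$ one has $\operatorname{rank}(N^m)=\sum_j(\pi_j-m)^+$, and for two nilpotent matrices of the same size, $\pi$ dominates $\pi'$ iff $\operatorname{rank}(N^m)\ge\operatorname{rank}(N'^m)$ for every $m\ge1$. Hence it suffices to prove $\operatorname{rank}(N_G^m)\ge\operatorname{rank}(N_A^m)$ for all $m$. Write $N_G=N_A+L$ with $N_A=H^{(n_1)}\oplus\cdots\oplus H^{(n_\ell)}$ block-diagonal and $L=G-A_k$ block-strictly-lower-triangular. Then each $N_G^m$ is again block-lower-triangular, and expanding $(N_A+L)^m$ block by block one sees that its $i$-th diagonal block equals $(H^{(n_i)})^m$: any word of length $m$ in $\{N_A,L\}$ that returns to block $i$ cannot use $L$, because $L$ shifts strictly downwards among the blocks while $N_A$ preserves the block. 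Finally, any block-lower-triangular matrix has rank at least the sum of the ranks of its diagonal blocks (choose independent columns inside each diagonal block; the triangular shape forces the corresponding columns of the whole matrix to remain independent). Therefore
$$
\operatorname{rank}(N_G^m)\ \ge\ \sum_{i=1}^{\ell}\operatorname{rank}\big((H^{(n_i)})^m\big)\ =\ \operatorname{rank}\Big(\bigoplus_{i=1}^{\ell}(H^{(n_i)})^m\Big)\ =\ \operatorname{rank}(N_A^m),
$$
which yields $\gamma_t(G)\le\gamma_t(A_k)$ for every $t$.

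\emph{Strictness, and the main obstacle.} Suppose $\gamma_t(G)=\gamma_t(A_k)$ for all $t$. Then $N_G$ and $N_A$ have the same invariant factors, so $\operatorname{rank}(N_G^m)=\operatorname{rank}(N_A^m)$ for all $m$: every inequality above is an equality, so the diagonal blocks of each power $N_G^m$ account for its entire rank. One then wants to conclude $L=0$, i.e. $G=A_k$. This is the delicate step: equality of all $\gamma_t$ only forces $N_G$ to be conjugate to $N_A$, and in general a nonzero $L$ placed in ``interior'' positions of the off-diagonal blocks can preserve the Jordan type. The way out is to use the precise shape of the matrix $G$ coming from the reduction (shearing) step --- where the surviving off-diagonal entries connect the end of one Jordan chain with the beginning of another --- and to check, by the explicit minors computation of Wasow \cite[Lemma~19.4]{Was}, that a single such nonzero entry lengthens a chain and so strictly increases $\operatorname{rank}(N_G^m)$ for some $m$, equivalently produces a $t\times t$ minor of $G-\lambda I_n$ of strictly smaller $\mu$-order than every $t\times t$ minor of $A_k-\lambda I_n$. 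I expect this verification to be where essentially all the work lies.
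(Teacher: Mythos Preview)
The paper does not prove this lemma; it only refers the reader to \cite[Lemma~19.4]{Was}. Your argument for the inequality $\gamma_t(G)\le\gamma_t(A_k)$ is correct and self-contained: the translation via determinantal divisors to dominance of Jordan partitions, the rank criterion for dominance, and the observation that each power $N_G^m$ is block-lower-triangular with diagonal blocks $(H^{(n_i)})^m$ --- whence $\operatorname{rank}(N_G^m)\ge\sum_i\operatorname{rank}\big((H^{(n_i)})^m\big)=\operatorname{rank}(N_A^m)$ --- are all sound. On this half you have gone further than the paper.

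Your caution about the strictness claim is not merely prudent, it is necessary: the strictness assertion is \emph{false} at the level of generality stated. With $\ell=2$, $n_1=1$, $n_2=2$ and
\[
G=\begin{pmatrix}\lambda_k&0&0\\ a&\lambda_k&1\\ 0&0&\lambda_k\end{pmatrix},\qquad a\ne0,
\]
the nilpotent part $N_G=G-\lambda_kI_3$ has rank $1$ and $N_G^2=0$, so its Jordan type is $(2,1)$, identical to that of $N_A$; hence $\gamma_t(G)=\gamma_t(A_k)$ for every $t$ although $G\ne A_k$. What Wasow actually proves carries an extra hypothesis --- inherited from the ``special matrix'' normalization (assumption~($*$) in Step~2) together with the shearing --- forcing the surviving off-diagonal entries into the bottom-left corners of the blocks, where they genuinely splice two Jordan chains and strictly increase some $\operatorname{rank}(N_G^m)$. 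So your instinct to fall back on that additional structure is exactly right. For the record, Step~3 of the paper uses only the non-strict inequality (the drop in $I(\cdot)$ there comes from $q'-k'<q-k$); it is Step~4 that needs the strict part, and there the matrix $B_{k'}$ does carry the extra Wasow hypothesis, so the main argument survives even though the lemma is slightly overstated.
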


\vspace{.2cm}

{\em Step 2. Special matrices and choice of $g\in\Q$.-} 
Given a tuple $\sigma=(n_1,n_2,\cdots,n_\ell)$ of positive integers as above, denote $m_j:=\sum_{u=1}^{j}n_u$ for $j=1,...,\ell$. A matrix $T=(t_{uv})\in\MM_n(K)$ will be called a {\em special matrix of type $\sigma$} if $t_{uv}=0$ for every $(u,v)$ such that $u\not\in\{m_1,m_2,...,m_\ell\}$. 

A result in Wasow (\cite[Lemme 19.2]{Was}) assures that for any $N\ge k+1$ there exists a regular polynomial gauge transformation $\Psi_{P_N}$, where $P_N$ is of degree $N$ and $P_N(0)=I_n$, such that the transformed system $B=\Psi_{P_N}(A)$ satisfies that all coefficients of the truncation $J_N(B)$ are special matrices of type $\sigma=(n_1,n_2,\cdots,n_\ell)$, where the $n_j$ are the sizes of Jordan blocks of the matrix $A_k$. We will put $N=N(n,q,k)$ and, renaming $B$ again as the system $A$, we may assume the following assumption
\begin{quote}
	(*)\;\; For any $k+1\le j\le N$, the coefficient $A_j$ is a special matrix of type $\sigma$.
\end{quote} 

Write now
$$
x^{q+1}A(x)=A_0+xA_1+\cdots=
\sum_{i=0}^{k-1}\lambda_ix^iI_n+x^{k}\overline{A},\;
\mbox{ with }\overline{A}=A_k+O(x)=(\overline{a}_{uv}(x)).
$$
Put $\alpha_{uv}:=\nu(\overline{a}_{uv}(x))$. Notice that $\alpha_{uv}\in\N_{\ge 0}$ for any $u,v$, that $\alpha_{uu}>0$ for any $u$ and that $\alpha_{u,u+1}=0$ for at least one index $u\in\{1,...,n\}$. 
\begin{definition}\label{def:g}
With the conditions above, we define the {\em shearing order (of $A$)} as the rational number
$$
g=g(A):=\min\left(\{\frac{\alpha_{uv}}{1+u-v}\,:\,u>v\}\cup\{\alpha_{uu}\,:\,1\le u\le n\}\cup\{q\}\right).
$$
\end{definition}
Geometrically, as discussed in Wasow \cite[pp.104-105]{Was}, the shearing order is the smallest abscissa in which a line of the following family
$$
\{y=(v-u)x+\alpha_{uv}\}_{v<u}\cup\{y=\alpha_{uu}\}_u\cup\{y=q-k\}
$$ cuts the diagonal $y=x$.

Notice also that $g\in\Q$ and that $0<g\le q-k$. Write $g=\frac{h}{r}$ where $h,r$ are positive integers with no common factor.

The name ``shearing'' comes from Wasow's name for the gauge ``ramified'' transformation with matrix $S_g=\text{diag}\,(1,x^g,x^{2g},...,x^{(n-1)g})$. In our definitions, such a transformation has only sense if $g=h$ is an integer. Otherwise, we perform instead the composition $\Psi_{S_h}\circ R_r (=:``\Psi_{S_g}")$.

\vspace{.2cm}

{\em Step 3. The case where $g=h$ is integer}.
We consider the monomial diagonal gauge transformation $\Psi_{S_h}$ where $S_h=\text{diag}\,(1,x^h,x^{2h},...,x^{(n-1)h})$. If $A=(a_{uv}(x))$ then the transformed system $B:=\Psi_{S_h}[A]=S_h^{-1}AS_h-S_h^{-1}S'_h$ writes as
\begin{equation}\label{eq:effect-shearing}
B=\left(\begin{array}{cccc}
	a_{11}(x) & x^h a_{12}(x) & \cdots & x^{(n-1)h}a_{1n}(x) \\	x^{-h}a_{21}(x) & a_{22}(x) & \cdots & x^{(n-2)h}a_{2n}(x) \\
	 & \vdots & & \\
	x^{-(n-1)h}a_{n1}(x) & x^{-(n-2)h} a_{12}(x) & \cdots & a_{nn}(x) 
\end{array}\right)+ x^{-1}K,
\end{equation}
where $K$ is a constant diagonal matrix (in fact $K=\text{diag}(0,h,2h,\dots,(n-1)h)$).

By the choice of the shearing order $g=h$, one can see that we can write $B(x)=x^{-(q+1)}(\wt{B}_0+x\wt{B}_1+\cdots)$, where the $\wt{B}_j$ are constant matrices satisfying
\begin{enumerate}[(a)]
	\item $\wt{B}_j$ is diagonal for $j\in\{0,1,...,k+h-1\}$, with $\wt{B}_j=A_j$ if $j<k$, $\wt{B}_k=\lambda_kI_n$ and $\wt{B}_j=0$ if $k<j<k+h$.
	\item The entries of $\wt{B}_{k+h}$ above the principal diagonal coincide with those of the matrix $H^{(n_1)}\oplus\cdots \oplus H^{(n_\ell)}$. Moreover, $\wt{B}_{k+h}$ has a non-zero entry on or above the principal diagonal, except, possibly, if $h=q-k$. 
	\item For any $M\ge k$, the matrix $\wt{B}_M$ depends only on the truncation $J_{M+(n-1)h}(A)$. 
\end{enumerate}

As a consequence, the Poincaré rank after the shearing transformation does not increase. That is, 
\begin{equation}\label{eq:q-after-shearing}
q(B)\le q.
\end{equation}
Moreover, using (a) and (b), and denoting $q'=q(B)$, $k'=k(B)$, we have that if $q'=q$ then $k'=k+h$, while, if $q'<q$ then $k=0$, $q'=q-h$ and $k'=0$. In any case, we deduce
\begin{equation}\label{eq:qk-prima}
q'-k'<q-k\;\;\mbox{ and }\;\;N(n,q',k')+(n-1)h\le N(n,q,k).
\end{equation}
Thus, using the property (c) and the second part of equation (\ref{eq:qk-prima}), it will suffice to prove item (i)' of Theorem~\ref{th:c-turrittin-poly} for the system $B$. 

Write, accordingly to our main notations, $B(x)=x^{-(q'+1)}\left(B_0+xB_1+\cdots\right)$. We will be done if $B$ is in the trivial case $q'-k'=0$ or if $q'-k'>0$ but $B_{k'}$ has at least two different eigenvalues. Thus, assume that $q'-k'>0$ and that $B_{k'}$ has a single eigenvalue, say equal to $\lambda_{k'}'$. It is enough, by recurrence, to prove in this case that the tuple  $I(B)$ satisfies $I(B)<I(A)$. 

 Notice that $B_{k'}=\wt{B}_{k+h}$, after the discussion above concerning the value of $q'$ and $k'$. Consider the matrix $B_{k'}$ written in blocks according to the Jordan structure of $A_k$: 
$$
B_{k'}=\left(B_{k'}^{(ij)}\right)_{1\le i,j\le\ell},\;\; B_{k'}^{(ij)}\in\MM_{n_j\times n_j}(K).
$$
Using the property (b), we have that $B_{k'}=H^{(n_1)}\oplus\cdots\oplus H^{(n_1)}+T$ where $T$ is a lower triangular matrix. In particular, for $i\in\{1,...,\ell\}$, since $\mbox{Spec}(B_{k'}^{(ii)})=\{\lambda_{k'}'\}$, the matrix $B_{k'}^{(ii)}-\lambda_{k'}' I_{n_i}$ is nilpotent and of rank $n_i-1$. This implies that the Jordan decomposition of $B_{k'}^{(ii)}$ has a single block and hence $B_{k'}^{(ii)}$ is conjugated to $H^{n_i}+\lambda_{k'}'I_{n_i}$. Consequently, $B_{k'}$ is conjugated to a block-lower-triangular matrix with diagonal blocks equal to $H^{n_i}+\lambda_{k'}'I_{n_i}$, those of the matrix $A_k$. Thus, either $B_{k'}$ is conjugated to $A_k$ (when $\ell=1$) or we are in the situation where we can apply Lemma~\ref{lm:gamma-i}. From this lemma, and taking into account the first part of equation (\ref{eq:qk-prima}), we deduce that $I(B)<I(A)$, as wanted. 

\vspace{.2cm}

{\em Step 4. The case where $g$ is not an integer}. Assume now that the shearing order $g$ is not an integer and put $g=\frac{h}{r}$ where $h,r$ are positive integers without common factor and such that $r\ge 2$. We remark that the condition that $g$ is not an integer implies that $g$ is given by one of the quotients $\alpha_{uv}/(1-u-v)$ in Definition~\ref{def:g}, while $\alpha_{uu}>g$ for any $u$, as well as $q-k>g$. 

Consider the ramification $R_r$ and put $\wt{A}=R_r(A)$. This system has Poincaré rank and radiality index equal to $q(\wt{A})=rq$ and $k(\wt{A})=rk$, respectively. Moreover, if we denote $\wt{N}=N(n,rq,rk)$ and $N=N(n,q,r)$, then $\wt{N}=rN$ but the truncation $J_{\wt{N}}(\wt{A})$ only depends on $J_N(A)$, showing that if item (i)' of Theorem~\ref{th:c-turrittin-poly} holds for $\wt{A}$ then it also holds for $A$. 

We notice moreover that the first non-radial coefficient of $\wt{A}$ is equal to $A_k$ and that $\wt{A}$ also satisfies assumption (*) with respect to the same type $\sigma=(n_1,...,n_\ell)$ given by the Jordan structure of $A_k$. On the other hand, the shearing order of $\wt{A}$ is given by $g(\wt{A})=rg=h$, a natural number (the new valuations $\alpha_{uv}$ for $\wt{A}$ are all multiplied by $r$). Hence we are, for $\wt{A}$, in the situation of step 3. However, the last component $q-k$ of the tuple $I(\wt{A})$ has increased and we can not conclude automatically. 
To finish, we put $B=\Psi_{S_h}(\wt{A})$, we denote by $q',r'$ the Poincaré rank and the radiality index of $B$, respectively, and, writing $B=x^{-(q'+1)}\left(B_0+xB_1+\cdots\right)$, we show again that assuming that $q'-k'>0$ and that $B_{k'}$ has a single eigenvalue, we have $I(B)<I(A)$. 

Write $B_{k'}=\left(B^{(ij)}_{k'}\right)$ as a block matrix in the same block structure as $A_k$. We have (cf. property (b) in step 3 above) that $B_{k'}$ is block-lower triangular with diagonal blocks given by $B^{(ii)}_{k'}=H^{(n_i)}+T_i$, where $T_i$ is a lower triangular matrix of size $n_i$. Moreover, we must have that all elements in the diagonal of $T_i$ are zero (since all values $\alpha_{uu}$ for $\wt{A}$ are greater than $h=rg$, as mentioned above). On the other hand, by assumption (*), the entries of $T_i$ on any row except possibly the last one are also zero. On the other hand, since we have assumed that each block $B^{(ii)}_{k'}$, as the entire matrix $B_{k'}$, has a single eigenvalue, being the trace of $B^{(ii)}_{k'}$ equal to zero, such eigenvalue is equal to zero. That is, each $B^{(ii)}_{k'}$ is nilpotent. But this implies that $T_i=0$ and we conclude that
\begin{equation}\label{eq:Bii=Hni}
B^{(ii)}_{k'}=H^{(n_i)},\;\;\mbox{ for any } i\in\{1,2,...,\ell\}.
\end{equation}
Furthermore, by the definition of the shearing order of $\wt{A}$, we must have at least one non-zero entry below the principal diagonal of the matrix $B_{k'}$. Together with the equation (\ref{eq:Bii=Hni}), this implies that $\ell>1$ and that $B_{k'}$ is a matrix $G$ in the situation of Lemma~\ref{lm:gamma-i} with a non-diagonal block $B^{(ij)}_{k'}\ne 0$ for at least one pair $(i,j)$ with $i>j$. We conclude that $\gamma_t(B_{k'})<\gamma_t(A_k)$ for at least one index $t$ and thus $I(B)<I(A)$, as wanted.

\strut

Let us show now the statement in Remark~\ref{rks}, (c) concerning this item (i). In other words, we have to justify that all along the above process for obtaining a $TRS$-form of degree $0$, the Poincaré rank can only increase after a ramification and never after a shearing transformation $\Psi_{S_g}$ with $g\in\N$. 

First, notice that this property goes through the induction arguments discussed in paragraph~\ref{sec:case-diff-eigenv}. Thus, we may assume that we are in the case where $A_k$ has a unique eigenvalue. If the shearing order $g=g(A)$ is an integer, the required property is already established by equation (\ref{eq:q-after-shearing}). On the contrary, if the shearing order is $g=h/r$, with $r>1$ and $h$ not divisible by $r$, the procedure consists in the shearing $\Psi_{S_h}$ after the ramification $R_r$. We conclude using the same equation (\ref{eq:q-after-shearing}) once we observe that the system $\wt{A}:=R_r[A]$ has as first non-radial term the same matrix $A_k$ and satisfies $g(\wt{A})=h\in\N$ (just check that $\wt{A}$ satisfies already the property (*) and that the new values $\alpha_{uv}$ in Definition~\ref{def:g} are the old ones multiplied by $r$).

\subsection{Proof of Theorem~\ref{th:c-turrittin-poly}, (ii). Getting $(TRS)$-form of higher degree $\mu$ when $C$ is non-resonant}\label{sec:c-thm-ii}

A proof of this item (ii) when $q=0$ can be found for instance in Wasow \cite[Thm. 5.1]{Was}, Coddington-Levinson \cite[Thm. 4.1, Ch IV]{Cod-L} or Balser \cite[Thm. 5]{Bal}. The general case $q>0$ is not really different from those references because the exponential part $D(x)$, being diagonal and commuting with $C$ will play no essential role to achieve (ii). For the sake of completeness, let us indicate here the steps of the general proof.

Assume that the system $A$ is already in $(TRS)^q_0$-form with principal part $D(x)+x^qC$ and write it in the form
$
x^{q+1}Y'=\left(A_{0}+xA_{1}+\cdots\right)Y,
$
so that $$x^{q+1}J_qA=A_0+\cdots+x^qA_q=D(x)+x^qC.$$ 

We will need the following result from Linear Algebra (which is actually the core of the proof of the Splitting Lemma~\ref{lm:splitting}). See \cite[Thm. 4.1]{Was} for a proof.

\begin{lemma}\label{lm:linear-algebra}
Fix a field $K$ and let $R,S$ be two square matrices with coefficients in $K$ and of sizes $n\times n$ and $m\times m$, respectively. Assume that $R,S$ have no common eigenvalue in the algebraic closure $\overline{K}$ of $K$. Then the linear map
$X\mapsto RX-XS
$ from $
\MM_{n\times m}(K)$ to itself	
is an isomorphism.
\end{lemma}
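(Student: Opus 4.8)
\textbf{Proof proposal for Lemma~\ref{lm:linear-algebra}.} The plan is to prove injectivity of the $K$-linear map $\mathcal{L}:\MM_{n\times m}(K)\to\MM_{n\times m}(K)$, $\mathcal{L}(X)=RX-XS$; since $\mathcal{L}$ is an endomorphism of a finite dimensional $K$-vector space, injectivity is equivalent to bijectivity, which is what we want.

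First I would observe that if $RX=XS$ then, by an immediate induction on $j\ge 0$, $R^jX=XS^j$, and hence $p(R)X=Xp(S)$ for every polynomial $p\in K[t]$. Applying this with $p=\chi_S$, the characteristic polynomial of $S$, and using the Cayley--Hamilton theorem $\chi_S(S)=0$, we get $\chi_S(R)\,X=0$. So the whole argument reduces to the claim that $\chi_S(R)$ is an invertible matrix over $K$; once this is known, $X=\chi_S(R)^{-1}\cdot 0=0$, proving $\ker\mathcal{L}=0$.

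For the invertibility of $\chi_S(R)$ I would argue as follows. The hypothesis that $R$ and $S$ have no common eigenvalue in $\overline{K}$ means precisely that $\chi_R$ and $\chi_S$ have no common root in $\overline{K}$, i.e.\ $\gcd(\chi_R,\chi_S)=1$ in $K[t]$; hence there exist $a(t),b(t)\in K[t]$ with $a\chi_R+b\chi_S=1$. Evaluating at $R$ and using $\chi_R(R)=0$ (Cayley--Hamilton again) gives $b(R)\,\chi_S(R)=I_n$, so $\chi_S(R)\in GL_n(K)$, with explicit inverse $b(R)$. (Alternatively, one may pass to $\overline{K}$, put $R$ in triangular form with diagonal the eigenvalues $\lambda_1,\dots,\lambda_n$ of $R$, and note $\det\chi_S(R)=\prod_i\chi_S(\lambda_i)\ne 0$ since no $\lambda_i$ is an eigenvalue of $S$; but the Bézout version keeps everything over $K$.)

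There is essentially no real obstacle here: the only point that uses the hypothesis in an essential way is the coprimality of $\chi_R$ and $\chi_S$, and the only mild subtlety is to phrase it over $K$ rather than $\overline{K}$, which the Bézout identity handles cleanly. (Surjectivity can then be re-derived for free from injectivity, or one could alternatively exhibit a preimage of a given $Y$ directly using the same polynomial identities, but invoking finite dimensionality is simpler.)
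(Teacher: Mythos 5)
Your proof is correct. Note that the paper itself does not prove this lemma; it simply cites Wasow \cite[Thm.~4.1]{Was}. Wasow works over $\C$ and his argument (as in most ODE references) proceeds by conjugating the two matrices to a convenient (triangular/Jordan) form, after which the Sylvester operator $X\mapsto RX-XS$ becomes triangular with diagonal entries $\lambda_i-\mu_j$, none of which vanish; for a field $K$ that is not algebraically closed this forces a detour through $\overline{K}$ and a descent argument. Your route is different and in fact better suited to the level of generality the paper states: by deducing $p(R)X=Xp(S)$ from $RX=XS$, specializing $p=\chi_S$ and invoking Cayley--Hamilton, you reduce everything to the invertibility of $\chi_S(R)$, which you obtain from a Bézout identity $a\chi_R+b\chi_S=1$ in $K[t]$. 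This never leaves $K$, requires no normal form for $R$ or $S$, and uses the hypothesis exactly once, through the coprimality of the characteristic polynomials (which holds in $K[t]$ iff it holds in $\overline{K}[t]$). One small stylistic gain you could note explicitly: the same identity gives an explicit inverse $b(R)$ and hence, via $X=b(R)\bigl(\text{preimage computation}\bigr)$, a constructive solution of $RX-XS=Y$, which is occasionally useful downstream (e.g.\ in the recursive constructions of the Splitting Lemma), although the finite-dimensionality argument you invoke is perfectly adequate for the statement as given.
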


Up to reorder the variables of $Y$, we write %
\begin{equation}\label{eq:finer-block}
D(x)=D^{11}(x)\oplus D^{22}(x)\oplus\cdots\oplus D^{\ell\ell}(x),
\end{equation}
 where  $D^{jj}(x)$ is a radial matrix of size $n_j$ (that is $D^{jj}(x)=Q_j(x)I_{n_j}$, where $Q_j(x)\in K[x]_{q-1}$) for each $j=1,..,\ell$, and such that $D^{ii}(x)\ne D^{jj}(x)$ if $i\ne j$. Write also the residual matrix $C$, as well as any coefficient matrix $A_j$ with $j\ge q+1$, as block matrices $C=(C^{uv})_{1\le u,v\le\ell}$ and $A_j=(A_j^{uv})_{1\le u,v\le\ell}$, where $C^{uv},A_j^{uv}\in\MM_{n_u\times n_v}(K)$. Using the commutativity property $[D(x),C]=0$, the assumption $D^{ii}(x)\ne D^{jj}(x)$ for $i\ne j$ and Lemma~\ref{lm:linear-algebra}, we obtain that 
$$
C^{uv}=0\;\;\mbox{ if }u\ne v.
$$ 
We eliminate all coefficients $A_j$ for $j\ge q+1$ by means of a regular formal transformation in two steps.

\strut

{\em First step. We eliminate all non-diagonal blocks $\{A_j^{uv}\}_{u\ne v}$.}
We proceed by induction with respect to $q=q(A)$. If $q=0$ (that is, $D(x)=0$), the block structure is the trivial one and there are no non-diagonal blocks, so that there is nothing to prove. 

Suppose that $q>0$. We consider a coarser block structure %
\begin{equation}\label{eq:coarser-block}
	D(x)=\overline{D}^{11}(x)\oplus\cdots\oplus
\overline{D}^{\ell_1\ell_1}(x)
\end{equation}
in such a way that $\overline{D}^{jj}(0)$ is a radial constant matrix for every $j=1,..,\ell_1$ and $\overline{D}^{jj}(0)\ne\overline{D}^{ii}(0)$ if $i\ne j$. Notice that, up to reordering, each block $\overline{D}^{jj}(x)$ is formed by several of the diagonal blocks of the decomposition (\ref{eq:finer-block}).
We consider a matrix $\overline{T}(x)\in\MM_n(K[[x]])$ of the form $\overline{T}(x)=I_n+x^{q+1}\overline{T}_{q+1}+
x^{q+2}\overline{T}_{q+2}+\cdots$ such that, writing each coefficient $\overline{T}_j=(\overline{T}_j^{uv})$ in the same block structure as the one in (\ref{eq:coarser-block}), we have $\overline{T}_j^{uu}=0$ for any $u\in\{1,...,\ell_1\}$. The  regular gauge transformation $\psi_{\overline{T}}$ transforms the system $A$ into a system $\overline{B}:=\psi_{\overline{T}}[A]$ which is also in $(TRS)^q_0$-form with the same principal part $D(x)+x^qC$. Write $x^{q+1}\overline{B}=D(x)+x^qC+\sum_{j\ge q+1}x^{j}\overline{B}_j$, and, for any $j$, consider each coefficient of $A$ or $\overline{B}$ written in the block structure (\ref{eq:coarser-block}) as $A_j=(\overline{A}_j^{uv})$ and $\overline{B}_j=(\overline{B}_j^{uv})$, respectively. We obtain recursively, for any couple of indices $u,v\in\{1,...,\ell_1\}$ with $u\ne v$:
\begin{equation}\label{eq:recursive-splitting}
	\begin{array}{c}
		\overline{B}_{q+1}^{uv}=
		\overline{A}_{q+1}^{uv}+
		\overline{D}_0^{uu}\overline{T}_{q+1}^{uv}-
		\overline{T}_{q+1}^{uv}\overline{D}_0^{vv} \\
		\overline{B}_{q+2}^{uv}=
		\overline{A}_{q+2}^{uv}+
		\overline{D}_0^{uu}\overline{T}_{q+2}^{uv}-
		\overline{T}_{q+2}^{uv}\overline{D}_0^{vv}+\theta^{uv}_2(A_0,A_1,\overline{T}_{q+1})\\
		\vdots \\
		\overline{B}_{q+j}^{uv}=
		\overline{A}_{q+j}^{uv}+
		\overline{D}_0^{uu}\overline{T}_{q+j}^{uv}-
		\overline{T}_{q+j}^{uv}\overline{D}_0^{vv}+\theta^{uv}_j(\{A_s,\overline{T}_{q+s}\}_{s<j})\\
		\vdots	
	\end{array}
\end{equation} 
where each $\theta^{uv}_j$ is a polynomial matrix acting on the block entries of the explicitly indicated matrices. Using Lemma~\ref{lm:linear-algebra} with $R=\overline{D}^{uu}_0$ and $S=\overline{D}^{vv}_0$, we obtain recursively blocks $\overline{T}^{uv}_{q+1},\overline{T}^{uv}_{q+2},\ldots$ (i.e. the entire matrix $\overline{T}(x)$) so that $\overline{B}^{uv}_j=0$ for every $j\ge q+1$ and for every $u,v\in\{1,...,\ell_1\}$ with $u\ne v$. In other words, $\overline{B}=\psi_{\overline{T}}[A]$ is a block-diagonal formal system $\overline{B}=\overline{B}^{11}\oplus\cdots\oplus \overline{B}^{\ell_1\ell_1}$. Now, we consider separately each one of the subsystems $\overline{B}^{jj}$, for $j=1,...,\ell_1$. By construction, this system is in (TRS)-form of degree $0$ and with exponential part equal to $\overline{D}^{jj}(x)=\overline{D}^{jj}_0+\wt{D}^{jj}(x)$. Put $\wt{B}^{jj}:=\overline{B}^{jj}-x^{-(q+1)}\overline{D}^{jj}_0$, a system with Poincaré rank equal to $q(\wt{B}^{jj})=q-\nu(\wt{D}^{jj}(x))<q$, and also in (TRS)-form of degree $0$ and exponential part equal to $x^{-\nu_{j}}\wt{D}^{jj}(x)$, where $\nu_{j}:=\nu(\wt{D}^{jj}(x))$. Consider a block decomposition
\begin{equation}\label{eq:j-finer-block}
\wt{D}^{jj}(x)=\wt{D}^{jj,1}(x)\oplus\cdots\oplus\wt{D}^{jj,\ell^j}(x)
\end{equation}
analogous to that of $D(x)$ in equation (\ref{eq:finer-block}); that is, each block in (\ref{eq:j-finer-block}) is a radial matrix and two such blocks are different. By recurrence on $q$, there is a formal regular gauge transformation with matrix $T^j(x)=I+\cdots$ (with size equal to the size of the system $\overline{B}^{jj}$) such that $\Psi_{T^j}[\wt{B}^{jj}]$ is block-diagonal in the same block structure as (\ref{eq:j-finer-block}). Taking into account that $\overline{D}^{jj}_0$ is a radial matrix, the same is true for $\Psi_{T^j}[\overline{B}^{jj}]$. Put $$T(x)=(T^1(x)\oplus\cdots\oplus T^{\ell_1})\cdot\overline{T}(x)=I+\cdots\in\MM_{n\times n}(K[[x]]).
$$
Then, $B:=\Psi_{T}[A]$ is a system in $(TRS)^q_0$-form with the same principal part as $A$ and block-diagonal with respect to the concatenation of the different block-structures given by (\ref{eq:j-finer-block}); i.e., $D(x)=\bigoplus_{j=1}^{\ell_1}\wt{D}^{jj}(x)=\bigoplus_{j=1}^{\ell_1}\bigoplus_{i=1}^{\ell^j}\wt{D}^{jj,i}.
$
We are done, since this last decomposition is, up to reordering, the same as the initial one (\ref{eq:finer-block}).

\strut

{\em Step 2. Eliminating the diagonal terms.}
Consider the block-diagonal system $B=\Psi_T[A]=B^{11}\oplus\cdots\oplus B^{\ell\ell}$ as in the step 1, in the block structure given by (\ref{eq:finer-block}). We propose a second formal regular gauge transformation associated to $U\in\MM_n(K[[x]])$ of the form
$$
U=(I_{n_1}+U^{11})\oplus\cdots\oplus(I_{n_\ell}+U^{\ell\ell}),
$$ 
where $U^{uu}$ has size $n_u$ and $U^{uu}(0)=0$ for any $u$. Write $U^{uu}=xU^{uu}_1+x^2U^{uu}_2+\cdots$. The transformed system $E:=\psi_U[B]$ is again block-diagonal and $x^{q+1}J_qE=D(x)+x^qC$. We write $E=E^{11}\oplus\cdots\oplus E^{\ell\ell}$ and $x^{q+1}E^{uu}=D^{uu}+x^qC^{uu}+x^{q+1}E^{uu}_{q+1}+\cdots$ for each block $E^{uu}$. Taking into account that each block $D^{uu}$ is radial and hence commutes with any matrix, we obtain recursively for any $u\in\{1,...,\ell\}$ and for any $j\ge 1$
\begin{equation}\label{eq:recursive-diagonal}
E^{uu}_{q+j}=B^{uu}_{q+j}+C^{uu}U^{uu}_{j}-U^{uu}_j
(C^{uu}+jI_{n_u})+
	\eta^{uu}_j(\{U^{uu}_s,B^{uu}_{q+s}\}_{s<j}),
\end{equation}  
where, similarly as in equation (\ref{eq:recursive-splitting}), $\eta^{uu}_j$ is a polynomial acting over the expressed matrices, with $\eta^{uu}_{1}=0$. Using again Lemma~\ref{lm:linear-algebra} with matrices $R=C^{uu}$ and $S=C^{uu}+jI_{n_u}$ (notice that these two matrices have no common eigenvalue by the non resonance condition for $C$), we can construct recursively from (\ref{eq:recursive-diagonal}) the coefficients $U^{uu}_{q+1},U^{uu}_{q+2},\ldots$ such that 
$
E^{uu}_{q+j}=0
$
 for any $u\in\{1,...,\ell\}$ and for any $j\ge 1$.

 \strut
 
 The conclusion from the two steps above is that the composition $\Psi_{UT}=\Psi_U\circ\Psi_T$ satisfies
 $$
 \Psi_{UT}[A]=x^{-(q+1)}(D(x)+x^qC).
 $$
 To finish, notice from the expression of the transform of a system by a regular gauge transformation that, for every $\mu\ge0$, 
 the $(q+\mu)$-jet of $\Psi_{UT}[A]$ only depends on the $(q+\mu)$-jet of the product $UT$ (and on $J_{q+\mu}A$). 
 Thus, we put $P^\mu:=J_{q+\mu}(UT)$ and Theorem~\ref{th:c-turrittin-poly}, (ii) follows.

\subsection{Proof of Theorem~\ref{th:c-turrittin-poly}, (iii). Getting a non-resonant residual matrix}\label{sec:c-thm-iii}

A proof of this item, for $q=0$, can be found in the same references cited in paragraph~\ref{sec:c-thm-ii} above.

We assume that $A$ is in $(TRS)^q_0$ form with exponential part equal to $D(x)$ and residual part equal to $C$. 

\vspace{.2cm}

{\em Radial case.-} We consider first the case where $D(x)$ is a radial matrix; i.e., $D(x)=Q(x)I_n$, where $Q(x)$ is a polynomial (including the case $q=0$ for which $D(x)=0$). 

Take the partition ${\rm Spec}(C)=\Omega_1\cup\cdots\cup\Omega_r$ of the spectrum of $C$ in such a way that two eigenvalues differ by a non-zero integer number if and only if they belong both to some $\Omega_j$. Denote $\Omega_j=\{\lambda^j_1,...,\lambda^j_{s_j}\}$ and assume that the indices are chosen so that
$
Re(\lambda^j_1)>Re(\lambda^j_2)>\cdots>
Re(\lambda^j_{s_j})
$ in case $s_j>1$.
Let 
\begin{equation}\label{eq:mC}
m(C):=\sum_{\{j\,:\,s_j>1\}}\lambda_1^j-\lambda^j_{s_j}\in\N.
\end{equation}
 Notice that $m(C)>0$ if and only if $C$ is resonant. In order to prove item (iii) in this case, we show that, if $m(C)>0$, there is a constant regular transformation $\Psi_{P_0}$ and a monomial diagonal transformation $\Psi_{S}$ such that the transformed system $\Psi_S\circ\Psi_{P_0}[A]$ has Poincaré rank equal to $q$, the same exponential part $D(x)$ and a residual matrix $C'$ satisfying $m(C')<m(C)$. 

The matrix $P_0$ is chosen so that $\overline{C}:=P_0^{-1}CP_0$ is block diagonal of the form
\begin{equation}\label{eq:block-C}
\overline{C}=(C^{11}\oplus\cdots\oplus C^{1s_1})
\oplus\cdots\oplus(C^{r1}\oplus\cdots\oplus C^{rs_r}),
\end{equation}
where ${\rm Spec}(C^{ji})=\{\lambda^j_i\}$. The transformed system $\overline{A}=\Psi_{P_0}[A]=P_0^{-1}AP_0$ has the same exponential part $D(x)$ (since this is a radial matrix) and residual matrix equal to $\overline{C}$. Put $x^{q+1}\overline{A}=\sum_{l\ge 0}\overline{A}_lx^l$ and use the block structure given by equation (\ref{eq:block-C}) for each coefficient $\overline{A}_l=(\overline{A}_{l}^{uv})$, where $u,v$ run in the set $\{(ji)\,:\,j=1,..,r,i=1,...,s_j\}\cup\{\Lambda\}$. Assume for instance that $\Omega_1$ has at least two elements. The we consider the diagonal monomial matrix
\begin{equation}\label{eq:S}
S:=xI_{t}\oplus I_{n-t},
\end{equation}
where $t$ is equal to the size of $C^{11}$. Consider $B:=\Psi_S[\overline{A}]$ and write $x^{q+1}B=\sum_{l\ge 0}B_lx^l$ and $B_l=(B_l^{uv})$ with the same block structure as in (\ref{eq:block-C}). A calculation shows that
\begin{equation}\label{eq:B_l}
\begin{array}{ll}
B_l^{uu}=\overline{A}_l^{uu},\mbox{ if }l\ne q,&
B_l^{uv}=\overline{A}_l^{uv},\mbox{ if }u\ne (11)\mbox{ and }v\ne(11), \\
B_l^{(11)v}=x^{-1}\overline{A}_l^{(11)v}
,\mbox{ if }v\ne(11),& 
B_l^{u(11)}=x\overline{A}_l^{u(11)}
,\mbox{ if }u\ne(1,1),\\
B_q^{(11)(11)}=\overline{A}_q^{(11)(11)}-I_t. &
\end{array}
\end{equation}
In particular, we obtain $q(B)=q$, $B$ is in $(TRS)^q_0$-form with the same exponential part $D(x)$ and the residual matrix $C'=B_q$ is upper triangular with respect to the block structure $(B_q^{uv})$ and with diagonal equal to 
$$
{\rm diag}(C')=((C^{11}-I_t)\oplus\cdots\oplus C^{1s_1})
\oplus\cdots\oplus(C^{r1}\oplus\cdots\oplus C^{rs_r})\oplus C^\Lambda.
$$ 
We deduce that $m(C')=m(C)-1$ and we are done.

\vspace{.2cm}

{\em General case.-} Notice that, in preceding case, the degree of a polynomial gauge transformation needed to obtain a non-resonant residual matrix can be bounded by $m(C)$ (cf. Remark~\ref{rks}, (e)). Moreover, such transformation depends only on the truncation $J_{m(C)+q}(A)$. We use this remark and Step 1 in paragraph~\ref{sec:c-thm-ii} to reduce the general case to the precedent case. More precisely, consider the decomposition of the exponential part in radial matrices $D(x)=D^{11}(x)\oplus\cdots\oplus D^{\ell\ell}(x)$, as in equation (\ref{eq:finer-block}). Consider also $C=C^{11}\oplus\cdots\oplus C^{\ell\ell}$, decomposed into the same block structure. Put
$$
m:=\max\{m(C^{ii}),\:\,i=1,...,\ell\}.
$$
Taking into account Step 1 in the proof of (ii), there is regular polynomial gauge transformation $\psi$ such that the truncation $J_{m+q}(\psi[A])$ decomposes into several systems of smaller size
$$
J_{m+q}(\psi[A])=A^{11}(x)\oplus\cdots\oplus A^{\ell\ell}(x),
$$
where the principal part of $A^{jj}(x)$ is $x^{-\nu(D^{jj})}(D^{jj}(x)+x^qC^{jj})$. In particular, $A^{jj}(x)$ is in the radial case treated above, so that there is a finite composition of constant regular and monomial diagonal transformations $\vp_j$ such that $\vp_j[A^{jj}]$ has the same exponential part as $A^{jj}(x)$ and a non-resonant residual matrix. We conclude that the composition $\phi=(\vp_1\oplus\cdots\oplus\vp_\ell)\circ\psi$ transforms $A$ into a system in $(TRS)^q_0$-form with non-resonant residual matrix. On the other hand, as we have noticed in the radial case, the degree of each $\vp_j$ is bounded by $m(C^{jj})$, and hence the degree of the polynomial gauge transformation $\phi$ can be bounded by $2m$. This ends the proof of item (iii) of Theorem~\ref{th:c-turrittin-poly} and completes the statement in Remark~\ref{rks}, (e).  \hfill{$\square$}

\strut

One final comment on how to conclude Remark~\ref{rks}, (d) in what it concerns for this item (iii). We need to take into account that the diagonal monomial transformations used in the process are only those of the form $\Psi_S$, where $S$ is as in equation (\ref{eq:S}). As we have already observed from equations (\ref{eq:B_l}), such transformations preserve the Poincaré rank. 

%%%%%
%%%%
\section{Proof of the Real Turrittin's Theorem}

In this section we fix a real closed field $K$ and we prove Theorem~\ref{th:r-turrittin-poly}, the real version of Turrittin's Theorem on polynomial normal forms. As mentioned, the formal statement Theorem~\ref{th:r-turrittin-formal} will be a consequence of it.

We make use of the monomorphism of $K$-algebras defined in paragraph~\ref{sec:statement-real}. That is, for any $m\in\N$, we consider 
$$
\Theta_m:\MM_m(\overline{K})\to\MM_{2m}(K),\;\;(c_{uv})\mapsto(\Lambda_{c_{uv}}).
$$
and (with the same name), its extension to a morphism of $K$-algebras from $\MM_m(L_{\overline{K}})$ to $\MM_{2m}(L_K)$ sending an $m$-dimensional system $B=x^{-(q+1)}\sum_{j\ge 0}x^j B_j$ with coefficients in $\overline{K}$ to the system $\Theta_m(B)=x^{-(q+1)}
\sum_{j\ge0}x^j\Theta_m(B_j)$. 
Notice that $\Theta_m$ preserves the Poincaré rank but not necessarily the radiality index of the system. On the other hand, one can check easily that $\Theta_m$ commutes with the gauge transformations and with ramifications. To be precise, if $B,P\in\MM_m(L_{\overline{K}})$ with $\text{det}(P)\ne0$, we have
\begin{equation}\label{eq:gauge-phim}
\Theta_m(\Psi_P[B])=\Psi_{\Theta_m(P)}[\Theta_m(B)],	
\end{equation}
and, if $r$ is a natural non-zero number, then
\begin{equation}\label{eq:ramification-phim}
	\Theta_m\circ R_r=R_r\circ\Theta_m.	
\end{equation}
\subsection{Propagating a $\C$-matrix to higher order coefficients}

The key result for the proof of Theorem~\ref{th:r-turrittin-poly} is the following one.

\begin{proposition}\label{pro:key-for-real}
	Consider a system $A\in\MM_n(L_K)$ with Poincaré rank equal to $q$ and written as $A=x^{-(q+1)}\left(A_0+xA_1+\cdots\right)$. Let $k$ be the radiality index of $A$ and assume that $k<q$ and that the spectrum of $A_k$ in $\overline{K}$ consists in a pair of conjugated values $a\pm ib$ with $a,b\in K$ and $b\ne0$ (thus in particular $n=2m$ is even). Then there exists a formal regular gauge transformation $\Psi_T$, where $T\in\MM_n(K[[x]])$, such that the transformed system $B=\Psi_T[A]$ is a $\C$-system. Moreover, writing $B=x^{-(q+1)}(\sum_{j\ge 0}x^j B_j)$, we have $A_j=B_j\in KI_n$ for $j=0,1,...,k-1$ and for any $\mu\ge k$, the truncation $J_\mu B$ only depends on $J_{\mu-k}T$ and $J_\mu A$. In other words, if $\wt{A}$ is another system with $q(\wt{A})=q$ and satisfying $J_\mu\wt{A}=J_\mu A$, then $J_\mu(\Psi_{J_{\mu-k}T}[\wt{A}])=J_\mu B$.
\end{proposition}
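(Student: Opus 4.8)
The plan is to pass to the algebraic closure $\overline{K}=K(i)$, apply the Splitting Lemma there, and then descend to $K$ by exploiting the order-two Galois action. The first thing to do is to normalize the leading significant matrix $A_k$. Because $A_k\in\MM_n(K)$ has spectrum $\{a+ib,\,a-ib\}$ over $\overline{K}$ with $b\neq0$, complex conjugation interchanges the two eigenvalues and fixes the invariant factors of $A_k$, each of which is a power of the irreducible polynomial $(\lambda-a)^2+b^2$; hence the real canonical form of $A_k$ (Turrittin \cite{Tur3}) is exactly $\Theta_m(C_0)$ for some $C_0\in\MM_m(\overline{K})$ having $a+ib$ as its only eigenvalue. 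So there is $P_0\in GL_n(K)$ with $P_0^{-1}A_kP_0=\Theta_m(C_0)$, and, replacing $A$ by the constant gauge transform $A^{(0)}:=\Psi_{P_0}[A]=P_0^{-1}AP_0$ (which keeps the Poincaré rank, the radiality index and the scalar radial part), we may assume $A^{(0)}_k=\Theta_m(C_0)$.

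Next I would split over $\overline{K}$. Fix once and for all a matrix $V\in GL_n(\overline{K})$, assembled from the $2\times2$ blocks $\bigl(\begin{smallmatrix}1 & 1 \\ -i & i\end{smallmatrix}\bigr)$ and a coordinate permutation, so that $V^{-1}\Theta_m(M)V=M\oplus\tau(M)$ for every $M\in\MM_m(\overline{K})$, where $\tau$ denotes entrywise conjugation (extended coefficientwise to $L_{\overline{K}}$ and to systems); then $\tau(V)=V\Sigma$ with $\Sigma=\bigl(\begin{smallmatrix}0 & I_m \\ I_m & 0\end{smallmatrix}\bigr)\in GL_n(K)$ the block swap, and $\Sigma$ conjugates $M\oplus N$ to $N\oplus M$. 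Set $\overline{A}^{(1)}:=\Psi_V[A^{(0)}]=V^{-1}A^{(0)}V$; its first significant matrix is $\overline{A}^{(1)}_k=C_0\oplus\tau(C_0)$, whose two diagonal blocks have coprime characteristic polynomials of positive degree $m$. As $k<q$, the Splitting Lemma~\ref{lm:splitting} provides a formal regular gauge transformation $\Psi_{\overline{T}^{(1)}}$ with $\overline{T}^{(1)}\in\MM_n(\overline{K}[[x]])$, $\overline{T}^{(1)}(0)=I_n$, normalized so that $\overline{T}^{(1)}-I_n$ has only off-block-diagonal entries (such a transform being unique, by the recursive construction in the proof of that lemma), and with $\overline{B}^{(1)}:=\Psi_{\overline{T}^{(1)}}[\overline{A}^{(1)}]=\overline{B}^{11}\oplus\overline{B}^{22}$, each $\overline{B}^{ii}\in\MM_m(L_{\overline{K}})$; moreover $q(\overline{B}^{(1)})=q$, the coefficients up to index $k$ are unchanged, and $J_\mu\overline{B}^{(1)}$ depends only on $J_{\mu-k}\overline{T}^{(1)}$ and $J_\mu\overline{A}^{(1)}$ for every $\mu\ge k$.

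The descent step is the heart of the matter. Let $\rho(X):=\Sigma^{-1}\tau(X)\Sigma$, a $K$-semilinear involution on matrices and systems over $\overline{K}$ satisfying $\rho(\Psi_P[C])=\Psi_{\rho(P)}[\rho(C)]$; since $A^{(0)}$ has entries in $K$ and $\tau(V)=V\Sigma$, one gets $\rho(\overline{A}^{(1)})=\overline{A}^{(1)}$. Applying $\rho$ to the splitting relation yields $\Psi_{\rho(\overline{T}^{(1)})}[\overline{A}^{(1)}]=\rho(\overline{B}^{(1)})=\tau(\overline{B}^{22})\oplus\tau(\overline{B}^{11})$, again block diagonal, while $\rho(\overline{T}^{(1)})$ is again $I_n$ plus off-block-diagonal terms; by uniqueness of the normalized splitting transform, $\rho(\overline{T}^{(1)})=\overline{T}^{(1)}$, so $\overline{B}^{(1)}$ is $\rho$-invariant, forcing $\overline{B}^{22}=\tau(\overline{B}^{11})$. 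Writing $\overline{B}:=\overline{B}^{11}$ and $T:=P_0\,V\,\overline{T}^{(1)}\,V^{-1}$, the relations $\rho(\overline{T}^{(1)})=\overline{T}^{(1)}$ and $\tau(V)=V\Sigma$ give $\tau(T)=T$, so $T\in\MM_n(K[[x]])$ with $T(0)=P_0\in GL_n(K)$, i.e.\ $\Psi_T$ is a regular gauge transformation over $K$; and $\Psi_T[A]=V\overline{B}^{(1)}V^{-1}=V\bigl(\overline{B}\oplus\tau(\overline{B})\bigr)V^{-1}=\Theta_m(\overline{B})$ is a $\C$-system, as required.

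Finally, $A_j=B_j\in KI_n$ for $j=0,\dots,k-1$ because the radial part is preserved by regular gauge transformations, and the finite-determinacy assertions — including the reformulation with $T$ replaced by $J_{\mu-k}T$ — are inherited from Lemma~\ref{lm:splitting}, since $T$ and $\overline{T}^{(1)}$ differ by fixed constant matrices and $\overline{A}^{(1)}$ differs from $A$ by a constant gauge transformation. I expect the real difficulty to be bookkeeping rather than conceptual: tracking the two block structures and the permutations correctly, and nailing down the uniqueness of the normalized splitting transform together with its compatibility with $\rho$, which is exactly what makes the descent $\tau(T)=T$ work; once those are in place, the statement is essentially ``the Splitting Lemma over $\overline{K}$ plus Galois descent''.
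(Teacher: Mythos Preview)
Your argument is correct and takes a genuinely different route from the paper. The paper works entirely over $K$: after putting $A_k$ in real Jordan form $\Lambda+H$ (already a $\C$-matrix), it seeks $T=I_n+xT_1+x^2T_2+\cdots$ directly and solves the recursion $B_j=[A_k,T_{j-k}]+A_j+Q_j$ block by block via an elementary $2\times2$ fact (Lemma~\ref{lm:C-matrices}: for any $S\in\MM_2(K)$ there is $X\in\MM_2(K)$ with $\Lambda_\lambda X-X\Lambda_\lambda+S$ a $\C$-matrix), sweeping through the $2\times2$ blocks of each $T_{j-k}$ in a suitable order. You instead recognize the proposition as ``Splitting Lemma over $\overline{K}$ plus Galois descent'', using the uniqueness of the normalized (off-block-diagonal) splitting transform to force $\rho(\overline{T}^{(1)})=\overline{T}^{(1)}$ and hence $\tau(T)=T$. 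Your route is conceptually cleaner and reuses existing machinery rather than proving a new lemma; the paper's route is more elementary, explicitly constructive, and never leaves $K$ --- which is in keeping with the paper's stated aim of obtaining the real normal forms ``without passing through the algebraic closure''. One small caveat: the uniqueness of the normalized splitting transform on which your descent hinges is true and standard (it follows from the recursive construction together with Lemma~\ref{lm:linear-algebra}), but Lemma~\ref{lm:splitting} as stated in the paper does not assert it, so you are implicitly appealing to a slight strengthening of that lemma.
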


The proof of Proposition~\ref{pro:key-for-real} has a big similarity with the one of the Splitting Lemma (cf. Lemma~\ref{lm:splitting}) or of Theorem~\ref{th:c-turrittin-poly}, (ii). This time, it is based on the following result for $\C$-matrices of size two:

\begin{lemma}\label{lm:C-matrices}
Let $\lambda\in\overline{K}\setminus K$ and let $\Lambda_\lambda=\Theta_1(\lambda)$ be the corresponding $\C$-matrix of size $2$. Given $S\in\MM_2(K)$ an arbitrary matrix with coefficients in $K$, there exists a matrix $X\in\MM_2(K)$ such that 
$
\Lambda_\lambda X-X\Lambda_\lambda+S
$
is a $\C$-matrix.
\end{lemma}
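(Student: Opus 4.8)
The statement concerns the operator $X \mapsto \Lambda_\lambda X - X\Lambda_\lambda$ on $\MM_2(K)$, and asks to show its image, translated by any given $S$, meets the two-dimensional subspace of $\C$-matrices. The first thing I would do is identify $\MM_2(K)$ with the algebra $\overline{K} \oplus \overline{K}$ in a way compatible with $\Theta_1$. Concretely, since $\Theta_1$ embeds $\overline{K}$ in $\MM_2(K)$ as the commutant of $\Lambda_\lambda$ (and $\Lambda_\lambda$ has distinct eigenvalues $a \pm ib$ over $\overline{K}$), the $K$-algebra $\MM_2(K)$ decomposes, after extending scalars, according to the two eigenlines of $\Lambda_\lambda$. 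The cleanest route: work over $\overline{K}$. Diagonalize $\Lambda_\lambda = P\,\mathrm{diag}(\lambda,\bar\lambda)\,P^{-1}$ with $P \in GL_2(\overline{K})$, $\bar\lambda$ the conjugate $a - ib$. Then conjugating by $P^{-1}$, the operator $\mathrm{ad}_{\Lambda_\lambda}$ becomes $Y \mapsto \mathrm{diag}(\lambda,\bar\lambda)\,Y - Y\,\mathrm{diag}(\lambda,\bar\lambda)$, which acts on the entries of $Y = (y_{uv})$ by $y_{11} \mapsto 0$, $y_{22} \mapsto 0$, $y_{12} \mapsto (\lambda - \bar\lambda) y_{12}$, $y_{21} \mapsto (\bar\lambda - \lambda) y_{21}$. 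So the image of $\mathrm{ad}_{\Lambda_\lambda}$ over $\overline{K}$ is exactly the off-diagonal matrices, and the kernel is the diagonal ones.

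Next I would characterize the subspace $\Theta_1(\overline{K}) \subset \MM_2(K)$ inside this picture. A matrix $C \in \MM_2(K)$ is a $\C$-matrix iff it lies in $\Theta_1(\overline{K})$, i.e. iff it commutes with $\Lambda_\lambda$; equivalently, after conjugation by $P^{-1}$ and extension to $\MM_2(\overline{K})$, iff it is diagonal. So the question becomes: given $S \in \MM_2(K)$, can we choose $X \in \MM_2(K)$ so that $\mathrm{ad}_{\Lambda_\lambda}(X) + S$ is diagonal after conjugation by $P^{-1}$ — i.e. so that its two off-diagonal entries vanish. The off-diagonal part of $\mathrm{ad}_{\Lambda_\lambda}(X)$, viewed over $\overline{K}$, ranges over \emph{all} pairs $(y_{12}, y_{21}) \in \overline{K}^2$ as $X$ ranges over $\MM_2(\overline{K})$ — but we are constrained to $X \in \MM_2(K)$. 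The key point is that $\MM_2(K)$ is a $K$-form of $\MM_2(\overline{K})$ stable under the Galois involution $\tau$ (complex conjugation of entries), and both $\mathrm{ad}_{\Lambda_\lambda}$ and the "diagonal part" operator are defined over $K$ (since $\Lambda_\lambda$ has entries in $K$). Hence it suffices to solve the equation over $\overline{K}$ and then observe the solution can be taken $\tau$-invariant.

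More precisely, here is the step I expect to be the crux. Let $\pi : \MM_2(\overline{K}) \to \MM_2(\overline{K})$ denote the projection onto off-diagonal matrices in the $P^{-1}(\cdot)P$ basis; $\pi$ is defined over $K$ because $\Lambda_\lambda$ is, even though $P$ is not. I claim $\pi \circ \mathrm{ad}_{\Lambda_\lambda}$, restricted to $\MM_2(K)$, is surjective onto $\pi(\MM_2(K))$, and that $\pi(S) \in \pi(\MM_2(K))$ trivially. By the computation above, over $\overline{K}$ the map $\pi \circ \mathrm{ad}_{\Lambda_\lambda}$ is surjective onto all off-diagonal matrices, with a section given by $X = $ the off-diagonal matrix with entries $y_{12}/(\lambda - \bar\lambda)$ and $y_{21}/(\bar\lambda - \lambda)$. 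One checks this section commutes with $\tau$: since $\tau$ swaps the two eigenlines of $\Lambda_\lambda$ (because $\tau(\Lambda_\lambda) = \Lambda_\lambda$ while $\tau$ swaps $\lambda \leftrightarrow \bar\lambda$) and $\lambda - \bar\lambda = 2ib$ is purely imaginary so $\tau(\lambda - \bar\lambda) = -(\lambda - \bar\lambda) = \bar\lambda - \lambda$, the off-diagonal entries get swapped consistently with the division by $\pm(\lambda - \bar\lambda)$. Therefore the section maps $\tau$-invariant data (i.e. $\MM_2(K)$) to $\tau$-invariant solutions (i.e. $\MM_2(K)$). Concretely: given $S \in \MM_2(K)$, set $X_0 := -(\pi \circ \mathrm{ad}_{\Lambda_\lambda})|_{\text{off-diag}}^{-1}(\pi(S))$ computed in $\MM_2(\overline{K})$; then $\tau(X_0) = X_0$ by the above, so $X_0 \in \MM_2(K)$, and $\pi(\mathrm{ad}_{\Lambda_\lambda}(X_0) + S) = 0$, meaning $\Lambda_\lambda X_0 - X_0 \Lambda_\lambda + S$ is diagonal in the eigenbasis, hence commutes with $\Lambda_\lambda$, hence is a $\C$-matrix.

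Rather than tracking $P$ explicitly, a slicker equivalent is to exhibit the answer directly: write $S \in \MM_2(K)$ uniquely as $S = S_{\C} + S_{\mathrm{ac}}$ where $S_{\C}$ commutes with $\Lambda_\lambda$ and $S_{\mathrm{ac}}$ is in the image of $\mathrm{ad}_{\Lambda_\lambda}$ — this decomposition is the Fitting/primary decomposition of $\MM_2(K)$ for the operator $\mathrm{ad}_{\Lambda_\lambda}$, which is valid over $K$ because $0$ is an eigenvalue of $\mathrm{ad}_{\Lambda_\lambda}$ (eigenvalues $0, 0, \lambda-\bar\lambda, \bar\lambda-\lambda$) and $\mathrm{ad}_{\Lambda_\lambda}$ restricted to the non-zero part is invertible \emph{over $K$} since $(\lambda - \bar\lambda)(\bar\lambda - \lambda) = 4b^2 \in K^\times$ shows the characteristic polynomial of the restriction, namely $t^2 + 4b^2$, is invertible at $t = $ the operator. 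Then pick $X$ with $\mathrm{ad}_{\Lambda_\lambda}(X) = -S_{\mathrm{ac}}$, giving $\Lambda_\lambda X - X\Lambda_\lambda + S = S_{\C}$, a $\C$-matrix. The main obstacle in either presentation is simply checking that the "invert $\mathrm{ad}_{\Lambda_\lambda}$ off its kernel" step stays inside $\MM_2(K)$; the Galois-descent argument (or equivalently the observation $t^2 + 4b^2$ has coefficients in $K$) handles it cleanly, and everything else is a short linear-algebra verification.
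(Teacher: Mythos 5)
Your argument is correct, but it takes a genuinely different route from the paper. The paper's proof is a short explicit coordinate computation: writing $X=(x_{uv})$, it shows directly that $\Lambda_\lambda X - X\Lambda_\lambda$ equals $\left(\begin{smallmatrix}-u & v \\ v & u\end{smallmatrix}\right)$ with $u=b(x_{12}+x_{21})$ and $v=b(x_{11}-x_{22})$, so the two linear conditions defining a $\C$-matrix (equal diagonal entries, opposite off-diagonal entries) can be met by choosing $u,v$ appropriately, which is possible precisely because $b\neq 0$. Your proof instead recognizes the structure behind this calculation: the kernel of $\mathrm{ad}_{\Lambda_\lambda}$ on $\MM_2(K)$ is exactly $\Theta_1(\overline{K})$ (the centralizer of $\Lambda_\lambda$, equal to $K[\Lambda_\lambda]$ since $\Lambda_\lambda$ has distinct eigenvalues), and the Fitting decomposition $\MM_2(K)=\ker(\mathrm{ad}_{\Lambda_\lambda})\oplus\mathrm{im}(\mathrm{ad}_{\Lambda_\lambda})$ is valid over $K$ because the characteristic polynomial $t^2(t^2+4b^2)$ of $\mathrm{ad}_{\Lambda_\lambda}$ factors over $K$ into the coprime pieces $t^2$ and $t^2+4b^2$, and $\mathrm{ad}_{\Lambda_\lambda}$ is semisimple over $\overline{K}$ so $\ker(\mathrm{ad}^2)=\ker(\mathrm{ad})$. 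Then given $S$, one writes $S=S_{\C}+S_{\mathrm{ac}}$ and solves $\mathrm{ad}_{\Lambda_\lambda}(X)=-S_{\mathrm{ac}}$ on the invertible summand. This is more conceptual and explains \emph{why} the lemma holds (it is the statement that $\mathrm{ad}_{\Lambda_\lambda}$ has a $K$-rational kernel/image splitting and the kernel is exactly the $\C$-matrices), whereas the paper's approach is shorter to write out and requires no theory beyond a $2\times 2$ multiplication; the Galois-descent subplot in your first presentation is correct but is entirely subsumed by the cleaner coprime-factor argument you give at the end, which I would keep and drop the $\tau$-tracking.
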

\begin{proof}
	Put $\lambda=a+ib$ with $a,b\in K$ and $b\ne0$ and write $S=(s_{ij})$ and $X=(x_{ij})$ with $1\le i,j\le 2$. Computing we have
	\begin{equation}\label{eq:lambdaX-Xlambda}
	\Lambda_\lambda X-X\Lambda_\lambda+S=\left(\begin{array}{cc}
		-u+s_{11} &  v+s_{12} \\
		v+s_{21} &  u+s_{22} 
	\end{array}\right),
	\end{equation}
	where $u=b(x_{12}+x_{21})$ and $v=b(x_{11}+x_{22})$. The matrix in (\ref{eq:lambdaX-Xlambda}) is a $\C$-matrix iff we have $-u+s_{11}=u+s_{22}$ and $v+s_{12}=-(v+s_{21})$. These two last equations have solutions in $u,v$ once we are given the entries $s_{ij}$ of $S$ and, taking into account that $b\ne 0$, we conclude the lemma. 
\end{proof}

{\em Proof of Proposition~\ref{pro:key-for-real}.-} First, using a real canonical form of $A_k$, there is a non-singular matrix $T_0$ with entries in $K$ such that $T_0^{-1}A_kT_0=\Lambda+H$ where
$$
\Lambda=\Lambda_\lambda\oplus\Lambda_\lambda\oplus\cdots\oplus\Lambda_\lambda,\;
H=\left(\begin{array}{ccccc}
	0 & \epsilon_1 I_2 & & & \\
	  & 0  &\epsilon_2 I_2  & & \\
	 & & \ddots & & \\ 
	 & & & & \epsilon_{m-1} I_2 \\
	 & & & & 0
\end{array}\right), \mbox{ with }\epsilon_j\in\{0,1\}.
$$
Up to replacing $A$ by $\Psi_{T_0}[A]$, we may assume that the original coefficient $A_k$ has already the form above $A_k=\Lambda+H$, a $\C$-matrix. We look for a regular formal gauge transformation $\Psi_{T}$ with $T=I_n+xT_1+x^2T_2+\cdots$ satisfying the required property. For that, we compute the coefficients of the transformed system $B:=\Psi_T[A]$ in terms of the coefficients of $A$ and of $T$. With similar computations as already done in the preceding section, if we write $B=x^{-(q+1)}\left(B_0+xB_1+\cdots\right)$ then we get:

- The radial part does not change; i.e., $B_\ell=A_\ell$ for $\ell=0,1,...,k-1$.

- $B_k=A_k=\Lambda+H$.

- For $j\ge k+1$, we obtain
\begin{equation}\label{eq:Bj-in-real}
	B_j=[A_k,T_{j-k}]+A_j+Q_j,
\end{equation}
where $Q_j$ is a matrix which depends polynomially only on the matrices of the family  $\{A_{k+s}, T_{s}\}_{s<j-k}$. 

Let us show that we can choose recursively $T_1,T_2,\ldots$ such that each $B_j$ in equation (\ref{eq:Bj-in-real}) is a $\C$-matrix for any $j\ge k$. This will finish the proof of Proposition~\ref{pro:key-for-real}.

The starting case $j=k$ is done since $B_k=A_k$ is already a $\C$-matrix. Suppose that for $j>k$ we have already constructed $T_1,...,T_{j-1}$ such that $B_\ell$ is a $\C$-matrix for $\ell<j$. For each value of the letter $Y\in\{A,T,B,Q\}$ and for each $\ell\le j$, we write $Y_\ell=(Y_\ell^{uv})_{1\le u,v\le m}$ in a block structure of $2\times 2$ matrices. We construct the different blocks $T_j^{uv}$ in the following order.
We start by the bottom of the first column: the block $T^{m1}_j$ satisfies, after equation (\ref{eq:Bj-in-real}),
$$
[\Lambda_\lambda,T_j^{m1}]+A_j^{m1}+Q_j^{m1}=B_j^{m1}.
$$
Using Lemma~\ref{lm:C-matrices}, we choose $T_j^{m1}$ in such a way that $B_j^{m1}$ is a $\C$-matrix. Then we continue with the block $T_j^{m-1,1}$ which satisfies
$$
[\Lambda_\lambda,T_j^{m-1,1}]+\epsilon_{m-1}T_j^{m1}+A_j^{m-1,1}+
Q_j^{m-1,1}=B_j^{m-1,1}.
$$
Taking into account that $T_j^{m1}$ has already been chosen and using Lemma~\ref{lm:C-matrices}, we choose $T^{m-1,1}_j$ such that $B^{m-1,1}_j$ is a $\C$-matrix. The process can be repeated in this way until we construct all blocks in the first column, that is, those of the form $T_j^{u1}$ in inverse order for $u$ from $u=m$ to $u=1$. After that, we construct the blocks in the second column $T_j^{u2}$, again from $u=m$ to $u=1$: by (\ref{eq:Bj-in-real}) we get
$$
[\Lambda_\lambda,T_j^{u,2}]+
\epsilon_{u}T_j^{u+1,2}-\epsilon_1T_j^{u,1}+A_j^{u2}+
Q_j^{u2}=B_j^{u2}
$$
(with $\epsilon_m=0$), and we choose $T_j^{u2}$ such that $B^{u2}_j$ is a $\C$-matrix, once the blocks $T^{u+1,2}_j$, $T^{u1}_j$ in the above equation are already known. We continue in this way in order to complete the construction of all blocks $T^{uv}_j$ so that any block $B^{uv}_j$ (and hence the whole matrix $B_j$) is a $\C$-matrix.  \hfill{$\square$} 

\subsection{Proof of Theorem~\ref{th:r-turrittin-poly}, (i). Getting a $(\R TRS)$-form of degree $0$}

Fix a singular system $A\in\MM_n(L_K)$ with Poincaré rank $q=q(A)$ and write $A=x^{-(q+1)}\left(A_0+xA_1+\cdots\right)$ as in (\ref{eq:system-develop}) with $A_0\ne 0$. Denote by $k=k(A)$ the radiality index of $A$. As in the complex case, we prove a slightly improvement of item (i) in Theorem~\ref{th:r-turrittin-poly} (called item (i') in what follows), where the sufficient jet order $nq$ to obtain the same real Turrittin-Ramis-Sibuya form is replaced by the order $N=N(n,q,k):=n(q-k)+k$.

\vspace{.1cm}

We start with the trivial case where $q=k$ (this includes the case $q=0$). Using the real Jordan canonical form of $A_k=A_q$, there is a non-singular matrix $T_0\in\MM_n(K)$ such that $T_0^{-1}A_0T_0=C_1\oplus C_2$, where $C_1$ is a matrix with eigenvalues in $K$ and $C_2$ is a $\C$-matrix. The radial part $A_0+xA_1+\cdots+x^{q-1}A_{q-1}$ is preserved by $\Psi_{T_0}$ and can be written in the form $D_1(x)\oplus\Theta_{n_2}(D_2(x))$ where $D_1(x)\in\MM_{n_1}(K[X])$ and $D_2(x)\in\MM_{n_2}(K[x])$ are both diagonal polynomial. Hence $\Psi_{T_0}[A]$ is in $(\R TRS)^{q}_0$-form and (i') follows (notice that $N=q$ in this case). 
 
\vspace{.2cm}

Assume that $0\le k<q$. We proceed by induction with respect to the size $n$ of the system. The case $n=1$ is also trivial: $A$ is already in $(\R TRS)^q_0$-form with $n_1=n=1$ and $n_2=0$ and $N=q$ in this case. Suppose then that $n>1$.

\vspace{.1cm}

Suppose first that the first non-radial term $A_k$ has at least two non-conjugated eigenvalues in $\overline{K}$. In this case, after a constant regular gauge transformation $\Psi_{T_0}$, with $T_0\in GL_n(K)$, we can assume that $A_k=A^{11}_k\oplus A^{22}_k$, where each $A^{ii}_k$ is a square matrix with positive size $n_i$ with entries in $K$ and $\text{Spec}(A^{11}_k)\cap\text{Spec}(A^{22}_k)=\emptyset$. Apply the Splitting Lemma to $A$ up to order $N=N(n,q,k)$. That is, there exists a regular polynomial gauge transformation $\Psi_T$ where  $T=I+xT_1+\cdots+x^{N-k}T_{N-k}$ such that $J_N(\Psi_T[A])=B^{11}\oplus B^{22}$, where $B^{ii}$ is a (polynomial) system of size $n_i<n$ with coefficients in $K$. By induction on the size, item (i') holds for both systems $B^{ii}_k$. In a way completely analogous as we did for the complex case in paragraph~\ref{sec:c-thm-i}, we use this to conclude item (i') for the original system $A$.

\vspace{.1cm}

Suppose now that $\text{Spec}(A_k)=\{\lambda_k,\overline{\lambda_k}\}$ for some $\lambda_k\in\overline{K}$. We consider the two possible situations:

\strut

{\em Case 1: $\lambda_k\ne\overline{\lambda_k}$.} Notice that $n=2m$ is even in this case. We apply Proposition~\ref{pro:key-for-real} to the system $A$. Notably, let $\Psi_T$ be a formal regular gauge transformation with $T\in\MM_n(K[[x]])$ such that $B=\Psi_T[A]$ is a $\C$-system. Let $\overline{B}$ be the system with coefficients in $\overline{K}$ satisfying $B=\Theta_m(\overline{B})$. Apply Theorem~\ref{th:c-turrittin-poly}, (i) to $\overline{B}$: we get a natural number $r\ge 1$ and gauge transformations $\vp_1,\ldots,\vp_s$, either regular polynomial or monomial diagonal (with coefficients in $\overline{K}$) such that, putting $\psi=\vp_s\circ\ldots\circ\vp_1\circ R_r$, we have
\begin{enumerate}[(a)]
\item The system $\psi[\overline{B}]$ is in $(TRS)^{\tilde{q}}_0$-form for some $\tilde{q}\ge 0$. 
\item Being $N=N(n,q,k)$, if $\overline{E}$ is another system with $q(\overline{E})=q$ and $J_N\overline{E}=J_N\overline{B}$, the system $\psi[\overline{E}]$ is also in $(TRS)^{\tilde{q}}_0$-form with the same principal part as $\psi[\overline{B}]$.
\end{enumerate}

Now, for any $i=1,...,s$, if $\vp_i=\Psi_{P_i}$ with $P_i\in\MM_n(\overline{K}[[x]])$, we put $\wt{\vp}_i:=\Psi_{\Theta_m(P_i)}$. Notice that $\wt{\vp}_i$ is a gauge transformation with coefficients in $K$, either regular polynomial or diagonal monomial. Denote by
$\wt{\psi}=\wt{\vp}_s\circ\cdots\circ\wt{\vp}_1
\circ R_r$ and let us see that the composition 
$$
\wt{\psi}\circ\Psi_{J_NT}=\wt{\vp}_s\circ\cdots\circ\wt{\vp}_1
\circ\Psi_{J_NT(x^r)}\circ R_r
$$ satisfies the requirements of Theorem~\ref{th:r-turrittin-poly}, (i'). Using the property (b) above and equations (\ref{eq:gauge-phim}) and (\ref{eq:ramification-phim}) we have that 
$$
\Theta_m(\psi[J_N(\overline{B})])=
\wt{\psi}[J_N(\Theta_m(\overline{B}))]=\wt{\psi}[J_N(B)]
$$   
is in $(\R TRS)^{\tilde{q}}_0$-form. On the other hand, we have the following

\vspace{.2cm}

{\bf Claim.-} If $E\in\MM_n(L_K)$ is any system with Poincaré rank equal to $q$ and $J_NE=J_NB$ then $J_{\tilde{q}}(\wt{\psi}[E])=J_{\tilde{q}}(\wt{\psi}[J_NB])$.

\vspace{.2cm}
Applying this claim to $E=\Psi_{J_NT}[A]$, taking into account that $J_N(\Psi_{J_NT}[A])=J_NB$, we conclude  Theorem~\ref{th:r-turrittin-poly}, (i') in this Case 1.

It remains to show the Claim. It is a consequence of the property (b) above (using (\ref{eq:gauge-phim}) and (\ref{eq:ramification-phim})) in the case that $E$ is a $\C$-system). To be convinced that it is true for any system $E$ in the hypothesis of the statement, we notice, using the description of a general gauge transformation or a ramification, that there exists some integer $M>0$ such that $J_{\tilde{q}}(\wt{\psi}[E])=J_{\tilde{q}}(\wt{\psi}[J_ME])$ for such systems and that the map $H_M:J_ME\mapsto J_{\tilde{q}}(\wt{\psi}[J_ME])$ is a polynomial map in the entries of the coefficient matrices $E_0,...,E_M$ of $J_ME$. Necessarily the minimum $M$ with this property must be greater or equal than $N$. But, as we have just said, property (b) implies that the value of $H_M$ does not depend on the entries of $E_{N+1},\ldots,E_M$ if $E$ is a $\C$-system. Since the set of $J_M$-jets of $\C$-systems with a fixed Poincaré rank $q$ has non-empty interior in the space of $J_M$-jets of all systems (with that fixed Poincaré rank), we conclude that $M=N$ satisfies the property above. The Claim follows.

\vspace{.5cm}

{\em Case 2: $\lambda_k=\overline{\lambda_k}$.} 
In this case, $A_k$ has a single eigenvalue $\lambda_k\in K$. After a constant gauge transformation with entries in $K$, we can write $A_k$ in its Jordan canonical form as in equation (\ref{eq:jordan-Ak}). We can define in this case the  tuple $I(A)=(\g_1(A_k),\ldots,\g_n(A_k),q-k)$ and proceed exactly as in the proof of the complex Turrittin theorem in paragraph~\ref{sec:c-thm-i} from the step in which $A_k$ has the Jordan form (\ref{eq:jordan-Ak}). Notably, the terms in the truncation $J_{N}A$, with $N=N(n,q,k)$, determine a {\em shearing order} $g=h/r\in\Q$ (cf. Definition~\ref{def:g}). Then, we  consider the transformed system $B=\Psi_{S_h}\circ R_r[A]$, where   $R_r$ is the ramification of index $r$ and  $S_h=\text{diag}(1,x^h,...,x^{(n-1)h})$. Denoting by $q'$ and $k'$ the Poincaré rank and the radiality index of $B$ respectively, one of the following situations occurs:
\begin{itemize}
	\item $k'=q$ (including the case $q'=0$): we finish since this the trivial case.
	\item $0\le k'<q'$ and $B_{k'}$ has at least two non-conjugated eigenvalues in $\overline{K}$: we finish using splitting lemma and induction on $n$ as above.
	\item $0\le k'<q'$ and $B_{k'}$ has a unique pair of conjugated eigenvalues that do not belong to $K$: we finish since we are in Case 1 above.
	\item $0\le k'<q'$ and $B_{k'}$ has a unique eigenvalue that belongs to $K$: in this case, the arguments in Steps 3 and 4 in paragraph~\ref{sec:c-thm-i} are valid for the real closed field $K$ and they permit to conclude $I(B)<I(A)$ (in lexicographical order). We finish again since this tuple of non-negative integers number cannot decrease indefinitely. 
\end{itemize}

This ends the proof of Theorem~\ref{th:r-turrittin-poly}, (i'). \hfill{$\square$}

\subsection{Proof of Theorem~\ref{th:r-turrittin-poly}, (ii): getting $(\R TRS)$-form of higher degree}\label{sec:R-item-(ii)}

The proof can be done similarly to the case where $K$ is algebraically closed in paragraph~\ref{sec:c-thm-ii}, with only some minor changes. Let us indicated them.

Suppose that the system $A$ is in $(\R TRS)^q_0$-form with exponential part equal to $D(x)=D_1(x)\oplus D_2(x)$ and residual matrix $C=C_1\oplus C_2$ in the conditions of Definition~\ref{def:TRS-form-real}. In particular, $D_2(x)=\Theta_{n_2}(E_2(x))$ and $C_2=\Theta_{n_2}(G_2)$, where $G_2\in\MM_{n_2}(\overline{K})$ and $E_{2}(x)={\rm diag}(d_1(x),...,d_{n_2}(x))$ with $d_j(x)\in\overline{K}[x]_{q-1}\setminus K[x]$. We also assume that $C$ is non-resonant, which is equivalent to say that both $C_1$ and $G_2$ are non-resonant matrices (of sizes $n_1$ and $n_2$, respectively). 

We consider a block structure to write our system, similar to the one given in (\ref{eq:finer-block}), but compatible with the fact that $D_2(x)$ is a $\C$-matrix. Notably, we write
\begin{equation}\label{eq:block-R}
D(x)=D^{11}(x)\oplus\cdots\oplus D^{ss}(x)\oplus\Phi_{k_1}(E^{11}(x))\oplus\cdots\oplus\Phi_{k_t}(E^{tt}(x)),	
\end{equation}
where 
\begin{itemize}
	\item Each $D^{jj}(x)=f_j(x)I_{m_j}$ is a radial matrix in $\MM_{m_j}(K[x])$ (i.e., the coefficients of $f_j(x)$ are in $K$).
	\item Each $E^{jj}(x)=g_j(x)I_{k_j}$ is a radial matrix in $\MM_{k_j}(\overline{K}[x])$ (i.e., the coefficients of $g_j(x)$ are in $\overline{K}$).
	\item $D_1(x)=D^{11}(x)\oplus\cdots\oplus D^{ss}(x)$ and $D_2(x)=\Phi_{n_2}(E^{11}(x)\oplus\cdots\oplus E^{tt}(x))$.
\end{itemize}  
Put 
In accordance with the notation of equation (\ref{eq:finer-block}), we denote $\ell:=s+t$ and $D^{jj}(x):=\Phi_{k_{j-s}}(E^{j-s,j-s}(x))$ for $j=s+1,...,\ell$. We write also $A=x^{-(q+1)}\sum_l x^lA_l$ as in (\ref{eq:system-develop}) and each coefficient $A_l=(A^{uv}_l)_{1\le u,v\le\ell}$ in the block structure of (\ref{eq:block-R}). 

Notice that the residual matrix $C=C_1\oplus C_2$ is block-diagonal in this structure, since it commutes with $D(x)=D_1(x)\oplus D_2(x)$ and $C_2$ is a $\C$-matrix. Thus, $A_{q+1}^{uv}=C^{uv}=0$ if $u\ne v$. 

We want to eliminate all coefficients $A_{q+1},A_{q+2},...$ by means of a formal regular gauge transformation $\Psi_{P}$ with $P\in\MM_n(K[[x]])$ and $P(0)=I_n$. We proceed as in paragraph~\ref{sec:c-thm-ii} in two steps: first we eliminate the non-diagonal blocks $A_l^{uv}$, for $u\ne v$ and $l\ge q+1$, and then the diagonal blocks $A_l^{uu}$, for $u=1,...,\ell$ and $l\ge q+1$. 

\strut

The first step is proved by induction on $q$, as in the mentioned paragraph. The case $q=0$ is trivial. If $q>0$, we consider a coarser block structure than the one given by (\ref{eq:finer-block}) More precisely, we consider a similar block structure as the one in (\ref{eq:coarser-block}), where each block $\overline{D}^{jj}(x)$ with $j\in\{1,...,\ell_1\}$ is of maximal size such that its value $\overline{D}^{jj}_0:=\overline{D}^{jj}(0)$ at zero is:
\begin{enumerate}[(a)]
	\item Either a radial matrix (i.e., $\overline{D}^{jj}_0=a_j I_{h_j}$ with some $a_j\in K$).
	\item  Or a radial $\C$-matrix (i.e., $\overline{D}^{jj}_0=\Phi_{h_j}((a_j+ib_j)I_{h_j})$ for some $a_j+ib_j\in\overline{K}\setminus K$).
\end{enumerate}
 Notice that a block $\overline{D}^{jj}(x)$ in the case (a) may contain several of the blocks in the decomposition (\ref{eq:block-R}), even of the two different types $\{D^{ll}(x)\}_{l\le s}$ and $\{D^{ll}(x)\}_{l>s}$. In any case, using the same equations (\ref{eq:recursive-splitting}) and Lemma~\ref{lm:linear-algebra}, we can construct a formal matrix $\overline{T}=I_n+x^{q+1}\overline{T}_{q+1}+\cdots\in K[[x]]$ such that the system $\overline{B}=\Psi_{\overline{T}}(A)$ has zero non-diagonal blocks with respect to this last structure  $D(x)=\bigoplus_{1\le j\le\ell_1}\overline{D}^{jj}(x)$. Hence, $\overline{B}=\bigoplus_{1\le j\le\ell_1}\overline{B}^{jj}$, where $\overline{B}^{jj}$ is of size $h_j$ when $\overline{D}^{jj}_0$ is in the case (a), or $\overline{B}^{jj}$ is of size $2h_j$ when $\overline{D}^{jj}_0$ is in the case (b). In this last case, using Proposition~\ref{pro:key-for-real}, we can assume that $\overline{B}^{jj}$ is a $\C$-system (notice that $b_j\ne 0$ in this case, so that $k(\overline{B}^{jj})=0$ and $q(\overline{B}^{jj})=q>0$). 
 
 Put $\wt{B}^{jj}:=\overline{B}^{jj}-x^{-(q+1)}\overline{D}^{jj}_0$ for $j=1,...,\ell_1$, a system with Poincaré rank strictly smaller than $q$. 
 At this point, the proof continues as the one in Step 1 of paragraph~\ref{sec:c-thm-ii} by constructing, using the induction hypothesis, a regular transformation $\Psi_{T^{j}}$ that applies and transform the subsystem $\wt{B}^{jj}$ into a block-diagonal one with respect to the structure induced on the block $\overline{B}^{jj}$ by (\ref{eq:block-R}). We only have to take care about the following: for any index $j\in\{1,...,\ell_1\}$ such that $\overline{D}^{jj}_0$ in the case (b) above, the matrix $T^j$ must be chosen to be a $\C$-matrix, so that $\Psi_{T^{j}}$ preserves the system $x^{-(q+1)}\overline{D}^{jj}_0$ and thus this transformation applied to $\overline{B}^{jj}$ produces the same result.
 
 \strut
 
 Finally, the second step (eliminating the diagonal blocks $A^{uu}_l$ for $l\ge q+1$) is obtained exactly in the same way as in Step 2 of the proof of Theorem~\ref{th:c-turrittin-poly}, (ii) in paragraph~\ref{sec:c-thm-ii}: we have to solve recursively the same equations (\ref{eq:recursive-diagonal}) for the blocks $U^{jj}$, and this can be done independently of the base field $K$, since we only need  Lemma~\ref{lm:linear-algebra} (valid for any field) and the hypothesis that $C$ is non-resonant.

\subsection{Proof of Theorem~\ref{th:r-turrittin-poly}, (iii). Getting a non-resonant matrix}
The proof of this item is made entirely equal to the corresponding complex case (cf. Theorem~\ref{th:c-turrittin-poly}, (iii)) in paragraph~\ref{sec:c-thm-iii}. The only difference is that the first case treated there, called the ``radial case'', must be treated here in two different cases: either we are in the similar ``radial case'' with coefficients in $K$ (that is $D(x)=f(x)I_n$ with some polynomial $f(x)\in K[x]_{q-1}$), or we are in the {\em $\C$-radial case} (that is $D(x)=\Theta_{n/2}(g(x)I_{n/2})$, where $g(x)\in\overline{K}[x]_{q-1}\setminus K[x]$). In the second of these two cases, we have that $D(x)\ne 0$ and $0\le k(A)<q(A)$, so that we can apply Proposition~\ref{pro:key-for-real} and assume, after a regular polynomial gauge transformation of degree $m=m(C)$ (cf. equation (\ref{eq:mC})), that the truncation $J_{q+m}(A)$ is a $\C$-system, image by $\Theta_{n/2}$ of some system $\wt{A}\in\MM_{n/2}(L_{\overline{K}})$ with exponential part equal to $\wt{D}(x)=g(x)I_{n/2}$. By Theorem~\ref{th:c-turrittin-poly}, (iii), we transform $\wt{A}$ into another one with the same exponential part and non-resonant residual matrix by a regular gauge transformation $\Psi_{S}$, where $S=I_{n/2}+xS_1+\cdots\in\MM_{n/2}(\overline{K}[x]_{m})$. In this case, the regular transformation $\Psi_{\Theta_{n/2}(S)}$ proves item (iii) for the real system $A$.

The general case is done as in paragraph~\ref{sec:c-thm-iii} by means of the decomposition (\ref{eq:block-R}) of $D(x)$ and using step 1 of the proof of Theorem~\ref{th:r-turrittin-poly}, (ii), already discussed in the previous paragraph~\ref{sec:R-item-(ii)}.

\strut

This ends the proof of Theorem~\ref{th:r-turrittin-poly}. \hfill{$\square$}

\end{document}